\newcommand{\eps}{\ensuremath{\varepsilon}}
\newcommand{\Yildirim}{Y{\i}ld{\i}r{\i}m\xspace}
\newcommand{\Ozluk}{\"{O}zl\"{u}k\xspace}
\newcommand{\dx}{\mathrm{d}}
\definecolor{carmine}{rgb}{0.7, 0.1, 0.09}
\definecolor{debianred}{rgb}{0.84, 0.04, 0.33}
\newtheoremstyle{sltheorems}
{10pt}
{6pt}
{\sl}
{}
{\bfseries}
{.}
{.5em}
{\thmname{#1}\thmnumber{~#2}\thmnote{~(#3)}}
\theoremstyle{sltheorems} 
\newtheorem{Thm}{Theorem}
\newtheorem*{Thm*}{Theorem}
\newtheorem{CThm}{Classical Theorem} 
\newtheorem{Cor}[Thm]{Corollary}
\newtheorem{conj}{Conjecture}
\newtheorem{lem}{Lemma}
\newtheorem*{cor*}{Corollary}
\newtheorem{Rem}{Remark}
\theoremstyle{definition}
\begin{document}

\title[Pair Correlation of zeros of Dirichlet $L$-Functions]
{Pair Correlation of zeros of Dirichlet $L$-Functions: \\
A possible path towards the conjectures of Chowla, Elliott--Halberstam and Montgomery}


\author*[1]{\fnm{Neelam} \sur{Kandhil}}\email{\small{neelamkandhil091@gmail.com}}
\equalcont{\small{These authors contributed equally to this work.}}
\author[2]{\fnm{Alessandro} \sur{Languasco}}\email{\small{alessandro.languasco@unipd.it}}
\equalcont{\small{These authors contributed equally to this work.}}
\author[3]{\fnm{Pieter} \sur{Moree}}\email{\small{moree@mpim-bonn.mpg.de}}
\equalcont{\small{These authors contributed equally to this work.}}

\affil[1,3]{\small{\orgdiv{Max-Planck-Institut f\"ur Mathematik}, 
\orgaddress{\street{Vivatsgasse 7}, \city{Bonn}, \postcode{D-53111}, \country{Germany}}}}

\affil[2]{\small{\orgdiv{Dipartimento di Ingegneria dell'Informazione - DEI}, \orgname{Universit\`a di Padova},
\orgaddress{\street{Via Gradenigo 6/b}, \city{Padova}, \postcode{35131}, \country{Italy}}}}

\abstract{Assuming the Generalized Riemann Hypothesis and a pair correlation conjecture 
for the zeros of Dirichlet $L$-functions, we establish the truth of  a conjecture
of Montgomery (in its corrected form stated by Friedlander and Granville) on the magnitude
 of the error term in  the prime number theorem in arithmetic progressions.
As a consequence, we obtain  that, under the same assumptions, the Elliott--Halberstam 
conjecture holds true. As another consequence, under the same assumptions, we will show 
that the number of  Dirichlet characters $\chi \pmod{q}$ for which $L(\frac{1}{2},\chi)=0$ 
is of order less than  $q^{1/2+\eps}$.}

\keywords{Primes in arithmetic progressions, Dirichlet $L$-functions, Riemann Hypothesis, 
pair-correlation conjectures,  Chowla's conjecture, Elliott--Halberstam conjecture, 
Montgomery's conjecture.}

\pacs[MSC Classification]{11M26, 11N05}

\maketitle

\vskip-0.75cm
\section{Introduction}
The study of pair correlation of the zeros of the Riemann zeta function and Dirichlet 
$L$-functions has its origin in the early 20th century.
In the 1930s, Bohr and others  (Landau, Hardy)
investigated the distribution of zeros in the critical strip.
In 1942, Selberg \cite{selberg}  proved that a 
positive density of the zeros of Riemann zeta function are of odd 
multiplicity and lie on the critical line. 
The density he obtained is around 
$10^{-8}$, which was dramatically 
increased in 1974 by Levinson \cite{Levinson1974}, who improved it to more than one third.
Currently it is known,  due to
Bui, Conrey and Young \cite{BCY2011}, that more than $40\%$ of the zeros are simple and 
on the critical line.

In 1973, Montgomery made a major breakthrough
and conjectured that the pair correlation of zeros of the zeta function follows a 
distribution similar to that of the eigenvalues of 
random complex Hermitian or unitary matrices of large orders.
That there might be such a connection, was an idea that first arose during a discussion he had
with the physicist Dyson.
In the intervening years, much of the focus shifted to formalizing this connection and 
examining its implications.
A way to approach the pair correlation of Riemann zeros is to
try to asymptotically evaluate sums of the form 
\[
\sum_{\substack{0 < \gamma_j \leq T, \, j = 1,2 \\ 
\zeta(1/2+ i\gamma_j) = 0}} f(\gamma_1 - \gamma_2),
\]
with $f$ taken from a class of functions as large as possible. Using Fourier analysis, see,
e.g.~\cite{Montgomery1973}, it can be shown that 
\begin{equation}
\label{fouriertransform}
\sum_{\substack{
0 < \gamma_j \leq T, \, j = 1,2 \\ 
\zeta(1/2 + i\gamma_j) = 0}} 
r\Bigl(\frac{\gamma_1 - \gamma_2}{2\pi}\Bigr) \, W(\gamma_1 - \gamma_2) 
= 
\int_{-\infty}^{\infty} F(e^x,T)\, \widehat{r}(x) \, \dx x,
\end{equation}
where 
\[
W(u) \coloneq 
\frac{4}{4+u^2},
\quad 
\widehat{r}(x) 
\coloneq
\int_{-\infty}^{\infty} r(u) e^{-2\pi ixu} \, \dx u,
\]
is the Fourier transform of $r \in L^1(\mathbb{R})$,
and 
\[
F(x,T) 
\coloneq
\sum_{\substack{ 0 < \gamma_j \leq T, \, j = 1,2 \\  \zeta(1/2 + i\gamma_j) = 0}} 
x^{i( \gamma_1 - \gamma_2)}\, 
W(\gamma_1 - \gamma_2).
\]
In 1973, Montgomery \cite{Montgomery1973} proved, assuming the 
Riemann Hypothesis (RH), that
\begin{equation}
\label{PC-proved}
F(x, T) \sim  
\frac{T}{2\pi}\log x,
\quad \textrm{uniformly for} \ 1\leq x \leq T^{1-\eps},\,
T \to \infty,
\end{equation}
a range he extended in 1987, together with Goldston
\cite{GM1984} to $1\leq  x\leq  T$.
In \cite{Montgomery1973}, Montgomery
also formulated the  \emph{pair correlation conjecture},
stating that
\begin{equation}
\label{PC-conjecture}
F(x, T)  \sim 
\frac{T}{2\pi}\log T, 
\quad \textrm{uniformly for} \ T\leq  x \leq  T^A
\ \textrm{for every} \ A\ge 1
\end{equation}
(for the heuristic arguments that led him to believe in this,  see \cite[Sect.~3]{Montgomery1973}).

For fixed $\alpha \leq \beta$ it can be seen 
using \eqref{fouriertransform} that this conjecture is equivalent to

\begin{equation}
\label{dyson}
\sum_{\substack{0<\gamma_1, \gamma_2 \leq  T\\ \frac{2\pi \alpha}{\log T} 
\leq \gamma_1 - \gamma_2 \leq \frac{2\pi \beta}{\log T} }}
1
\sim
\Bigl(
\int_{\alpha}^{\beta} 
\Bigl( 
1 - \Bigl( \frac{\sin(\pi u)}{\pi u} \Bigr)^2  \, \dx u \Bigr)
+\delta(\alpha, \beta)
\Bigr)
\frac{T}{2\pi} \log T,
\end{equation}
as $T$ tends to infinity,
where $\gamma_1, \gamma_2$ are the imaginary parts of the zeros of the Riemann 
zeta function on the critical line and $\delta(\alpha, \beta) = 1$ if $0 \in [\alpha, \beta]$, and $0$ otherwise. 
For an accessible introduction to the pair correlation conjecture, see, e.g.,~Goldston \cite{PCintro}.

In the late 20th century, Odlyzko \cite{Ody87,Odlyzko2001} numerically studied the 
distribution of spacings between zeros of the Riemann zeta function. He
computed $10$ billion zeros near $10^{22}$-nd zero of Riemann zeta and verified the 
Riemann Hypothesis for those zeros.
Additionally, he found that the spaces between these zeros are closely distributed 
according to \eqref{dyson}.
Assuming RH, he showed that the estimate \eqref{PC-proved}
implies that at least $2/3$ of the zeros of Riemann zeta function are simple
(it is expected that all the zeros are simple). 
This is a much stronger result than the unconditional results mentioned 
in the first paragraph of this article.
In 1982, \Ozluk \cite{ozlukthesis} in his thesis, see
also \cite{OzlukBanff,ozlukjnt}, studied the 
Dirichlet $L$-function analogue of Montgomery's conjecture.
Consequently, under the Riemann Hypothesis for Dirichlet $L$-functions,
also known as the Generalized Riemann Hypothesis (GRH),   he
showed in \cite{ozlukjnt} that at least $11/12$ of all the zeros of 
such functions are simple
(allowing for a certain weight function).

In 1991, \Yildirim \cite{yal} studied the pair correlation of zeros of Dirichlet $L$-functions 
and an analogue of the conjecture stated in equation
\eqref{PC-conjecture}. In the sequel, we will work with a redefined pair correlation function. 
The key difference is that our summation will not be restricted to the zeros in the upper half-plane, 
but will also include those in the lower half-plane (note that the imaginary parts of the zeros of 
Dirichlet $L$-functions are typically not 
symmetrically distributed with respect to the real axis). In order to describe this in more detail, 
we need some notations.

From now on, we assume the 
Riemann Hypothesis for Dirichlet $L$-functions (GRH).
Let $q$ be a natural number,
$\chi$ a Dirichlet character modulo $q$ and $L(s, \chi)$  the associated Dirichlet $L$-function. 
Let $\gamma_j$ be the imaginary part of the $j$th zero (ordered by height on the half line).
Given two Dirichlet characters $\chi_1$ and $\chi_2$ modulo $q$, one can wonder to what 
extent the zeros of $L(s,\chi_1)$ are correlated with those of $L(s,\chi_2)$.
In order to measure this, we define
\[
G_{\chi_1,\chi_2}(x,T) 
\coloneq
\sum_{\substack{ \vert \gamma_j \vert \leq T, \, j = 1,2 \\ L(1/2+ i\gamma_j, \chi_j) = 0}} 
x^{i(\gamma_1 - \gamma_2)} W(\gamma_1 - \gamma_2).
\]

Henceforth, throughout the article, whenever we write $\gamma$ and $\gamma_j$ in the summation 
without additional specifications, we assume, respectively, that $L(1/2 + i\gamma, \chi) = 0$ 
and $L(1/2 + i\gamma_j, \chi_j) = 0$. 
Moreover, we remark that 
rather than $0<\gamma\leq  T$, we require that $ \vert \gamma \vert \leq  T$,
thus taking into account that the imaginary parts 
of the zeros of Dirichlet $L$-functions are (mostly) not symmetrical with respect to the real axis.
Since it currently cannot be excluded that $L(\frac{1}{2},\chi)=0$ for
many characters $\chi$, we are forced to allow for $\gamma_j=0$ in the definition of $G_{\chi_1,\chi_2}(x,T)$ too.
It is expected that $L(\frac{1}{2}, \chi) \ne 0$ for all primitive characters $\chi$, but this remains unproved.
The first to conjecture this was Chowla \cite{chowla}, in the case of  quadratic characters.
Soundararajan \cite[p.~454]{Sound} proved that the proportion of Dirichlet $L$-functions $L(s, \chi_{8d})$,
with $d$ odd and squarefree, which do not vanish at $s=1/2$ is at least $7/8$.
\Ozluk and Snyder \cite{OzlukSnyder}
showed, assuming GRH, that $L(\frac{1}{2}, \chi_d) \ne 0$ for at 
least $15/16$ of the fundamental discriminants $ \vert d \vert  \leq  X$.
Khan, Mili\'cevi\'c and Ngo \cite{KMN}
proved that $L(\frac{1}{2},\chi) \ne 0$ 
for at least $5/13-\eps$ of all 
primitive Dirichlet characters $\chi$ of modulus $q$, 
for all primes $q$ large enough in terms of $\eps>0$. 
Qin and Wu \cite{QinWu2025}
proved that
for at least $7/19$ of the primitive Dirichlet characters $\chi$ with 
large general modulus, the central value $L(\frac{1}{2},\chi)$ is non-vanishing.
Very recently, Leung \cite{Leung2025} showed that at least
$35.9\%$ is attainable, if one further restricts to $q$ large, square-free and smooth.
Matom\"aki and {\v C}ech \cite[Cor.~1]{marmat} showed that
if $\psi$ is a real primitive character modulo $D$ with $L(1,\psi) \ll (\log D)^{-25 -\eps}$, 
then for any prime $q \in [D^{300}, D^{O(1)}]$, 
one has $L(\frac{1}{2}, \chi) \ne 0$ for almost all $\chi$ mod $q$.
The interested reader may consult \cite{arm,NK,serre}
for further details.

Note that $G_{\chi_1,\chi_2}(x,T)$ is an analogue of $F(x,T)$.
We make a more global object out of 
$G_{\chi_1,\chi_2}(x,T)$
by considering for every $a$ coprime with $q$ the quantity
\begin{equation}
\label{Fq-def}
F_q(x,T) 
=
F_q(x,T;a)
\coloneq
\sum_{\chi_1,\chi_2 \pmod{q}} \overline{\chi_1}(a) \chi_2(a)G_{\chi_1, \chi_2} (x,T).
\end{equation}

Observe that  $F_q(x,T)$  depends on $a$. However, in our estimates the actual value of $a$ 
will not play a role and so, in accordance with the existent literature, we suppress the $a$ dependence.
It is worth remarking that $F_q(x,T)$ is real and non-negative.
To prove this we need to introduce some ideas, which we will
exploit also in Section \ref{S4}, contained in 
a paper by Heath-Brown \cite{HB}.
We need first to remark that an easy computation gives
$W(u) = \int_{-\infty}^{\infty} e^{ivu}e^{-2 \vert v \vert }\, \dx v$ and
that, letting 
\begin{equation}
\label{Sigma-def}
\Sigma(x,T,v) 
=
\Sigma(x,T,v;q,a) 
\coloneq
\sum_{\chi \pmod{q}}
\overline{\chi} (a)
\sum_{ \vert \gamma \vert \leq T} x^{i\gamma}e^{iv\gamma},
\end{equation}
another easy computation shows that
\begin{equation}
\label{integral-form-of-F}
F_q(x,T) = 
\int_{-\infty}^{\infty} \vert \Sigma(x,T,v) \vert ^2\,e^{-2 \vert v \vert }\, \dx v,
\end{equation}
implying that $F_q(x,T)\ge 0$.
%
Now, using  \cite[Ch.~16, eq.~(1)]{Davenport}  and  \cite[Ch.~16, Lemma]{Davenport}, 
we have 
\begin{equation*}
    \vert G_{\chi_1, \chi_2}(x,T) \vert  
   \le
   \sum_{\substack{ \vert \gamma_j \vert  \leq T, \, j = 1, 2 
   }}
\frac{1}{4 + (\gamma_1 - \gamma_2)^2}
\ll 
T \log^2 (qT),
\end{equation*}
uniformly in $x$ as $T \to \infty$.
Therefore, we trivially have
\begin{equation}
\label{trivial}
   F_q(x,T)
\ll T (\varphi(q) \log (qT))^2,
\end{equation}
uniformly in $x$ as $T \to \infty$,
where $\varphi$ denotes the Euler's totient function.
\Yildirim \cite{yal} defined the 
$G_{\chi_1, \chi_2}(x,T)$ and $F_q(x,T)$ analogues 

\[
G^{+}_{\chi_1, \chi_2}(x,T) 
\coloneq 
\sum_{\substack{0 \leq  \gamma_j \leq T \\
}} x^{i(\gamma_1 - \gamma_2)} W(\gamma_1 - \gamma_2)
\]
and
\begin{equation}
\label{Fq-plus-def}
F^{+}_q(x,T)
=
F^{+}_q(x,T; a)
\coloneq
\sum_{\chi_1,\chi_2 \pmod{q}}
\overline{\chi_1}(a) \chi_2(a)
G^{+}_{\chi_1, \chi_2}(x,T),
\end{equation}
and  proved the following theorem\footnote{In 
fact, \Yildirim defined $G^{+}_{\chi_1, \chi_2}(x,T)$ using positive values
of $\gamma_j$, $j=1,2$, only. But inspecting his proof,
which is essentially Montgomery's original one, 
it is easy to see that it works with $0\le\gamma_j\leq  T$, $j=1,2$,
instead (see in particular the displayed equation after (25)
on page 187 of \cite{Montgomery1973}).}.

\begin{Thm}[\Yildirim \cite{yal}]
\label{yalthmthm}
Under GRH, as \( x \to \infty \), 
we have
\begin{equation}
\label{with2}
F^{+}_q(x,T)
\sim  \frac{\varphi(q)}{2\pi} T\log x
\end{equation}
uniformly for every $a$ coprime with $q$ and
\[
1 \leq q \leq  \sqrt{x}/ \log^{3}x 
\quad
\textrm{and}
\quad
\frac{x}{q}\log x \leq T \leq \exp(x^{1/4}).
\]
\end{Thm}

For smaller $T$ (with respect to $x$), he conjectured:
\begin{conj}[Pair correlation for Dirichlet 
$L$-functions \cite{yal}]
\label{Yild_conj}
Let $q=1$ or $q$ be a prime 
and \( 0 < \eta \leq 1 \) be a fixed real
number. 
Then, under GRH, as \( x \to \infty \),
\begin{equation}
\label{turkey}
F^{+}_q(x,T)
\sim  \frac{\varphi(q)}{2\pi} T\log (qT),
\end{equation}
uniformly for every $a$ coprime with $q$ and
\[
q \leq  \min(\sqrt{x}/ \log^{3}x, x^{1-\eta} \log x)
\quad
\textrm{and}
\quad
x^{\eta} \leq T < \frac{x}{q} \log x.
\]
\end{conj}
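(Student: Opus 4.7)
My strategy is to follow Montgomery's original pair correlation argument \cite{Montgomery1973}, adapted to Dirichlet $L$-functions as in Yıldırım's proof of Theorem \ref{yalthmthm}, and to push the analysis into the range $T<(x/q)\log x$. The starting point is the explicit formula for $-L'(s,\chi)/L(s,\chi)$, used to represent, for each fixed $\chi$, the sum
$$
\sum_{0<\gamma\le T} x^{i\gamma}\,W(\gamma-u)
$$
as a weighted sum over prime powers $\Lambda(n)\chi(n)$ together with contributions from the functional equation and from the contour boundary at height $T$. Multiplying two such representations (one for $\chi_1$ and a conjugate one for $\chi_2$), integrating in the auxiliary variable, and collecting terms yields a prime-sum representation of $G^{+}_{\chi_1,\chi_2}(x,T)$.

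I would then apply character orthogonality to $\sum_{\chi_1,\chi_2\pmod{q}}\overline{\chi_1}(a)\chi_2(a)\,G^{+}_{\chi_1,\chi_2}(x,T)$, collapsing the double character sum and restricting both prime variables to the coprime residue class $a\pmod q$. This reduces matters to understanding
$$
\varphi(q)^{2}\sum_{\substack{m,n\ge 1\\ m\equiv n\equiv a \pmod*{q}}}\frac{\Lambda(m)\Lambda(n)}{\sqrt{mn}}\,K(m,n;x,T),
$$
with an explicit oscillatory kernel $K$ built from $W$ and $x^{i(\cdot)}$. The diagonal contribution $m=n$ is handled by the prime number theorem in arithmetic progressions (unconditionally strong enough under GRH), and produces the piece $\tfrac{\varphi(q)}{2\pi}T\log(x/q)$, which already agrees with Theorem \ref{yalthmthm} in its range of validity.

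The entire remaining content of Conjecture \ref{Yild_conj} therefore resides in the off-diagonal terms $m\neq n$ with $m\equiv n\pmod q$. For $T\ge(x/q)\log x$ these are damped by the oscillation in $K$, which is precisely the reason Yıldırım stopped there. For smaller $T$ the kernel is essentially non-oscillatory on the relevant range of shifts $|m-n|\ll x/T$, and the off-diagonal mass should become the dominant contributor: heuristically, using the Hardy--Littlewood singular series for prime pairs with shift $h=m-n$ divisible by $q$, summation in $h$ would produce exactly the additional $\log(qT)-\log(x/q)$ needed to upgrade the main term from $T\log x$ to $T\log(qT)$.

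Making this rigorous is the main obstacle. One needs uniform asymptotics for
$$
\sum_{\substack{m,n\le x\\ m-n=h,\ m\equiv a\pmod*{q}}}\Lambda(m)\Lambda(n)
$$
valid for every nonzero $h$ divisible by $q$ with $|h|\ll x/T$, and with an error term strong enough to survive summation in $h$. Such an estimate is a quantitative, uniform, twin-primes-in-arithmetic-progressions statement that lies well beyond current techniques. In light of the main theorem of this paper, which \emph{deduces} the Elliott--Halberstam conjecture from Conjecture \ref{Yild_conj} under GRH, this obstruction is exactly what one should expect, and is arguably the essential reason the statement remains conjectural.
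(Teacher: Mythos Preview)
The statement you were asked to prove is \emph{Conjecture}~\ref{Yild_conj}; the paper does not prove it, and indeed no proof is currently known. There is therefore no ``paper's own proof'' to compare your attempt against. You recognise this yourself in your final paragraph, correctly identifying the off-diagonal prime-pair correlations (a uniform Hardy--Littlewood-type estimate for $\Lambda(m)\Lambda(n)$ with $m-n=h$, $q\mid h$) as the essential obstruction. Your heuristic sketch of how the main term $\tfrac{\varphi(q)}{2\pi}T\log(qT)$ should emerge is in line with the standard expectation, and your acknowledgment that the argument cannot presently be made rigorous is the honest conclusion.

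One factual correction: in your closing sentence you say the main theorem of the paper deduces the Elliott--Halberstam conjecture from Conjecture~\ref{Yild_conj}. In fact the deduction (Theorem~\ref{thmfullrange} and Corollary~\ref{EH-type-estim}) is from Conjecture~\ref{yalconj1}, which concerns the full-range sum $F_q(x,T)$ over $|\gamma_j|\le T$ rather than Y{\i}ld{\i}r{\i}m's $G^{+}$ restricted to $0<\gamma_j\le T$, and which asks only for an upper bound $\ll \varphi(q)Tx^{\eps}$ rather than the asymptotic \eqref{turkey}. The two conjectures are closely related in spirit, but they are not the same statement, and the paper is careful to formulate and assume only the weaker upper-bound version it actually needs.
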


\Yildirim in Conjecture \ref{Yild_conj} and in his results about the 
mean square assumed that $q$ is prime in order
to avoid the contribution of imprimitive characters; 
a restriction we do not have
in our application.
The difference between $G_{\chi_1, \chi_2} (x,T)$
and $G^+_{\chi_1, \chi_2} (x,T)$, and hence between $F_q(x,T)$
with $F^+_q(x,T)$, is that in the 
latter quantity the sum is 
performed only on the 
non-negative imaginary parts.
We, however, are forced to work
instead with the
condition $\vert \gamma \vert \leq  T$, since
the imaginary parts  of the zeros of Dirichlet $L$-functions are 
(mostly) not symmetrical with respect to the real axis
and it is possible that $L(\frac{1}{2},\chi)$ vanishes.
This is in fact standard, see, for example, also the explicit formula 
lemmas, Lemmas \ref{davenlem2}-\ref{davenlem}, and
\eqref{conj-trick}-\eqref{explform-positive-gammas}.
We also remark that we are able to relate $F_q(x,T)$
with $F^+_q(x,T)$ only if we assume that
there are no Dirichlet $L$-functions having
a zero attained at $1/2$.  
To show this, we need some definition first.
Let $a$ coprime with $q$ be fixed, put
\begin{align}
\label{Sigma-plus-def}
\Sigma^+(x,T,v) 
&=
\Sigma^+(x,T,v;q,a) 
\coloneq
\sum_{\chi \pmod{q}}
\overline{\chi} (a)
\sum_{0 \leq  \gamma \leq T} x^{i\gamma}e^{iv\gamma},
\\
\notag
\Sigma^-(x,T,v) 
&=
\Sigma^-(x,T,v;q,a) 
\coloneq
\sum_{\chi \pmod{q}}
\overline{\chi} (a)
\sum_{-T \leq  \gamma \leq 0} x^{i\gamma}e^{iv\gamma},
\end{align}
and
\[
\Sigma_0 \coloneq \sum_{\substack{\chi \pmod{q} \\ L(1/2,\chi)=0}} \overline{\chi} (a).
\]
Using \eqref{Sigma-def}, we have
\[
\Sigma(x,T,v)= \Sigma^+(x,T,v)+\Sigma^-(x,T,v) -  \Sigma_0
\]
and, recalling that $L(1/2 + i\gamma, \chi) = 0$ if and only if 
$L(1/2 - i\gamma, \overline{\chi}) = 0$,  we  obtain
\begin{equation}
\label{Sigma-conj}
\Sigma^-(x,T,v) = \overline{\Sigma^+(x,T,v)},
\end{equation}
immediately leading to
\begin{equation}
\label{Sigma-ineq}
\vert \Sigma(x,T,v) \vert 
\leq  \vert \Sigma^+(x,T,v) \vert + \vert \Sigma^-(x,T,v) \vert  
+ \vert \Sigma_0 \vert 
= 2\vert \Sigma^+(x,T,v) \vert + \vert \Sigma_0 \vert .
\end{equation}
Recalling now \eqref{Fq-plus-def}, we easily obtain, cf.\,\eqref{integral-form-of-F}, that
\begin{equation}
\label{integral-form-of-F-plus}
F^+_q(x,T) 
= 
\int_{-\infty}^{\infty} \vert \Sigma^+(x,T,v) \vert ^2\,e^{-2 \vert v \vert }\, \dx v
\ge 0
\end{equation}
and, using \eqref{Sigma-conj}, also that
\begin{align*}
F^{-}_q(x,T)&= F^{-}_q(x,T;a) \coloneq\sum_{\chi_1,\chi_2 \pmod{q}}
\overline{\chi_1}(a) \chi_2(a)
\sum_{\substack{-T \leq  \gamma_j \leq 0 \\  j = 1,2 
}} x^{i(\gamma_1 - \gamma_2)} W(\gamma_1 - \gamma_2)
\\
&= \int_{-\infty}^{\infty} \vert \Sigma^-(x,T,v) \vert ^2\,e^{-2 \vert v \vert }\, \dx v
=F^+_q(x,T) .
\end{align*}
Finally, using \eqref{integral-form-of-F}, \eqref{Sigma-ineq}-\eqref{integral-form-of-F-plus}, 
the Cauchy--Schwarz inequality \eqref{Cauchy} and $\int_{-\infty}^{\infty}\,e^{-2 \vert v \vert }\, \dx v =1$, 
we obtain
\begin{align*}
F_q(x,T) 
&
\le
4\int_{-\infty}^{\infty} \vert \Sigma^+(x,T,v) \vert ^2\,e^{-2 \vert v \vert }\, \dx v 
+
\vert \Sigma_0 \vert^2 + 4 \vert \Sigma_0 \vert \int_{-\infty}^{\infty} \vert \Sigma^+(x,T,v) \vert \,e^{-2 \vert v \vert }\, \dx v 
\\&
\leq  4 F^+_q(x,T)  + \vert \Sigma_0 \vert^2 + 
4 \vert \Sigma_0  \vert  (F^+_q(x,T))^{1/2}.
\end{align*}
Assuming that there are no Dirichlet $L$-functions that vanish at $1/2$
(so that $\Sigma_0=0$),
we then have $F_q(x,T) \leq  4 F^+_q(x,T)$ and $F_q(x,T) \leq  4 F^{-}_q(x,T)$.
In general, though, we cannot show that relations of this
kind hold true and hence we  are forced
to prove distinct results and 
to formulate conjectural upper bounds for 
both $F_q(x,T)$ and $F^+_q(x,T)$ functions separately.
\begin{Thm}
\label{yalmodifiedcor}
Let  $\eps>0$ be arbitrary and fixed.
Under GRH, as \( x \to \infty \),  we have
\[
F_q(x,T)  
\ll \varphi(q)T \log x
\quad
\textrm{and}
\quad
F^{+}_q(x,T) 
\ll \varphi(q)T \log x
\]
uniformly for  every $a$ coprime with $q$ and
\[
1 \leq q \leq  x^{1-\eps}
\quad\textrm{and} \quad
x \leq T
\leq \exp(\sqrt{x}).
\]
\end{Thm}

\begin{Rem}
\label{remark1}
In fact, it is easy to see that 
the $q$-uniformity 
range in Theorem \ref{yalmodifiedcor}
can be slightly extended to $1 \leq q \leq  x/ \log^{(1+\eps)} x$ by using the Brun--Titchmarsh estimate
in its whole range of validity.
The price paid for this is that
only the slightly weaker estimates  
$F_q(x,T) \ll\varphi(q) T  \log^2 x/\log \log x$ and 
$F^{+}_q(x,T)  \ll\varphi(q) T  \log^2 x/\log \log x$ are obtained. 
\end{Rem}

The different uniformity ranges for $q,T$ in our result compared with the ones 
in \Yildirim's are  due to the fact we wish to work 
with $q$ larger than $\sqrt{x}$, so outside the range in which the 
Bombieri--Vinogradov theorem can be applied. As a consequence, having $q$ so large forces us
to have $T$ larger than $x$ (rather than 
$\frac{x}{q}\log x$, as in \Yildirim's result).
As a penalty we only obtain an upper bound for $F_q(x,T)$, rather than an asymptotic formula. 
However, this suffices for our purposes.

Motivated by conjectures made by Montgomery, see \eqref{PC-conjecture}, and \Yildirim,
see \eqref{turkey}, for the remaining values of $T$ with respect to $x$, we 
believe:
\begin{conj}
\label{yalconj1}
Let $\eps>0$ be arbitrary and fixed. Under GRH, as \( x \to \infty \), 
we have
\begin{equation}
\label{turkey2}
F_q(x,T) \ll
\varphi(q)T x^{ \eps}
\quad
\textrm{and}
\quad
F^{+}_q(x,T) \ll
\varphi(q)T x^{ \eps},
\end{equation}
uniformly for  every $a$ coprime with $q$, 
with
\[
1 \leq q \leq  x^{1-\eps} \quad\textrm{and}\quad  
x^{\eps} \leq  T < x. 
\]
\end{conj}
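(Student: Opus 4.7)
The plan is to follow the Montgomery framework that underlies Theorem \ref{yalmodifiedcor}, but pushed into the harder range $x^\eps\le T<x$. The first step is the explicit formula for each $L(s,\chi_j)$ evaluated at $s=1/2+it$: a standard contour integration against a Lorentzian kernel yields, for every Dirichlet character $\chi$ modulo $q$,
\[
\sum_\gamma \frac{x^{i\gamma}}{\frac{1}{4}+(t-\gamma)^2}
= -\frac{1}{\sqrt{x}}\sum_{n\le x}\Lambda(n)\chi(n)\Bigl(\frac{x}{n}\Bigr)^{-it}
-\sqrt{x}\sum_{n>x}\frac{\Lambda(n)\chi(n)}{n}\Bigl(\frac{x}{n}\Bigr)^{-it} + O\bigl(\log(q(|t|+2))\bigr),
\]
up to lower-order terms from the Gamma factor and a harmless constant when $\chi$ is non-principal. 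Taking the product of this identity for $\chi_1$ with the complex conjugate of the corresponding identity for $\chi_2$, summing against $\overline{\chi_1}(a)\chi_2(a)$, and using character orthogonality to fold the character sum into the indicator of the progression $a\pmod{q}$, one arrives at
\[
F_q(x,T)\;=\; \frac{C\,\varphi(q)^2}{x}\int_{-T}^{T}\Bigl|\sum_{\substack{n\le x\\ n\equiv a\!\!\pmod{q}}}\Lambda(n)\Bigl(\frac{x}{n}\Bigr)^{-it}\Bigr|^2 dt
\;+\;\mathcal{E}(x,T,q),
\]
where $\mathcal{E}$ collects boundary contributions from $|t|\sim T$, cross-terms with the $O(\log(q(|t|+2)))$ remainder, and the tail $n>x$. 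An analysis parallel to that behind Theorem \ref{yalmodifiedcor} bounds $\mathcal{E}$ by $\varphi(q)T\log^2(qT)$, which is absorbed by $\varphi(q)Tx^\eps$.

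The next step is to control the mean square of the resulting Dirichlet polynomial over primes in a single progression. The Montgomery--Vaughan inequality gives
\[
\int_{-T}^{T}\Bigl|\sum_{\substack{n\le x\\ n\equiv a\!\!\pmod{q}}}\Lambda(n)\Bigl(\frac{x}{n}\Bigr)^{-it}\Bigr|^2 dt
\;\ll\; \sum_{\substack{n\le x\\ n\equiv a\!\!\pmod{q}}}\bigl(T+O(n)\bigr)\Lambda(n)^2,
\]
and combined with the Brun--Titchmarsh bound $\sum_{n\le y,\,n\equiv a\!\!\pmod{q}}\Lambda(n)^2\ll y\log y/\varphi(q)$, valid uniformly for $q\le y^{1-\eps}$, the diagonal $T$-contribution produces (after multiplication by $\varphi(q)^2/x$) a bound $\ll\varphi(q)T\log x$. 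This is comfortably below $\varphi(q)Tx^\eps$ and handles the diagonal. The off-diagonal $n$-contribution, however, yields $\ll\varphi(q)\,x\log x$, which matches the bound achieved in Theorem \ref{yalmodifiedcor} in the easy regime $T\ge x$ but is far too large in the range $T<x$ targeted by the conjecture.

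The main obstacle is therefore to save the factor $x/T$ in the off-diagonal term of Montgomery--Vaughan. This is exactly the Montgomery pair correlation barrier transplanted to the Dirichlet setting: in the regime $T<x$ the classical large sieve and Montgomery--Vaughan inequalities are lossy, and the missing saving must come from genuine information about the distribution of zeros of $L(s,\chi)$, not merely from an orthogonality-based mean value estimate. A plausible route is a dyadic Goldston--Montgomery style decomposition of $[-T,T]$ into short intervals, combined with sharp mean squares for character-twisted prime polynomials in short intervals (for instance in the spirit of $q$-aspect variants of the Matom\"aki--Radziwi\l\l\ method), or a conditional zero-density estimate strictly stronger than anything known under GRH alone. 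Producing such an input is, to the best of my knowledge, open, which is precisely why the statement is posed as a conjecture rather than a theorem.
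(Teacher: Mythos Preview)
The statement you are asked to prove is labelled \emph{Conjecture} in the paper, and the paper offers no proof of it; it is assumed as a hypothesis in Theorems~\ref{thmfullrange} and~\ref{weakpcapp}. Your final paragraph acknowledges exactly this, and your identification of the obstruction is essentially correct: the Montgomery explicit-formula framework reduces $F_q(x,T)$ to a mean square of a Dirichlet polynomial over primes in a single progression, the diagonal of the Montgomery--Vaughan mean-value theorem gives the acceptable $\varphi(q)T\log x$, and the off-diagonal gives $\varphi(q)x\log x$, which is dominated by the target $\varphi(q)Tx^\eps$ only once $T\ge x^{1-\eps}$. Saving the factor $x/T$ in the off-diagonal is precisely the open point, and nothing in the paper claims to do it.

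Two small inaccuracies in your sketch, neither of which affects your conclusion. First, the tail $\sum_{n>x}\Lambda(n)\chi(n)(x/n)^{3/2+it}$ is not an error term: in the paper's $R_1$ (see \eqref{eqn388}) it sits alongside the $n\le x$ sum and contributes on equal footing to $S(x)$ in Lemma~\ref{GM-lemma}. Second, your claim that the residual error $\mathcal{E}$ is $\ll\varphi(q)T\log^2(qT)$ is not what the paper's argument actually gives: the passage from the finite integral to $2\pi F_q(x,T)$ in \eqref{sona} costs $O\bigl(\varphi(q)^2\log T\log^2(qT)\bigr)$, which at the extreme corner $(q,T)\approx(x^{1-\eps},x^\eps)$ is of size $x^{2-2\eps+o(1)}$ and already exceeds the conjectured bound $\varphi(q)Tx^\eps\approx x^{1+\eps}$. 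So even before the Dirichlet-polynomial off-diagonal, the truncation step in the framework would itself need sharpening in part of the range; this is a secondary obstruction on top of the primary one you correctly isolated.
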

One could wonder why in  Conjecture \ref{yalconj1} we wrote the bounds for 
$F_q(x,T), F^+_q(x,T)$ with $x^\eps$  instead of with $\log^2 x$ 
(as Theorem \ref{yalmodifiedcor} and Remark \ref{remark1} would suggest)
and we limited the $q$-uniformity range  to $q\leq  x^{1-\eps}$. 
Both choices are due to the fact that 
with more daring assumptions we would obtain results 
that are in contradiction with the ones established by Friedlander--Granville  \cite[pp.~365--366]{FriedlanderG1989}
on the Montgomery and Elliott--Halberstam conjectures.

In Theorem \ref{yalmodifiedthm1} we extend the
allowed $T$-range of Theorem \ref{yalthmthm},
as we show that the logarithmic term from the lower 
bound for $T$ can be removed by utilizing the Goldston--Montgomery version
of Montgomery--Vaughan's mean value theorem for Dirichlet series 
(see Lemma \ref{gmslem}), instead of the 
original version given in \cite{Montvaug}. Furthermore, 
correcting a typo in \cite{yal} 
(see Footnote \ref{foot_typo} below on page \pageref{foot_typo}), 
we are able
to replace $\exp(x^{1/4})$ by $\exp(x^{3/4})$ in the upper bound  for $T$.
\begin{Thm}\label{yalmodifiedthm1}
Let $\eps>0$ be arbitrary and fixed.
Under GRH, as \( x \to \infty \), we have
\begin{equation}
\label{no2}
F_q(x,T)  
\sim \frac{\varphi(q)}{\pi} T\log x
\end{equation}
uniformly for  every $a$ coprime with $q$ and
\[
1 \leq q \leq  \sqrt{x}/\log^{2+\eps}x,
\quad\textrm{and} \quad
\frac{x}{\varphi(q)}   \leq T \leq 
\exp(x^{3/4}).
\]
\end{Thm}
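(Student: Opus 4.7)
The plan is to adapt Y{\i}ld{\i}r{\i}m's proof of Theorem \ref{yalthmthm} to the symmetric zero window $|\gamma|\le T$, while incorporating two technical improvements: first, using the Goldston--Montgomery mean-value inequality (Lemma \ref{gmslem}) in place of the original Montgomery--Vaughan bound, which saves a factor of $\log x$ in the admissible lower bound on $T$; and second, correcting the arithmetic of Y{\i}ld{\i}r{\i}m's explicit-formula error estimate (see Footnote \ref{foot_typo}), which enlarges the upper bound on $T$ from $\exp(x^{1/4})$ to $\exp(x^{3/4})$. The appearance of $\varphi(q)/\pi$ in the main term \eqref{no2} (rather than $\varphi(q)/(2\pi)$ as in \eqref{with2}) is a direct consequence of the doubling of the diagonal zero count when passing from $0 < \gamma \le T$ to $|\gamma| \le T$, since the imaginary parts of the zeros of $L(s,\chi)$ are not symmetric with respect to the real axis.

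Using the identity $W(u) = \int_{-\infty}^{\infty} e^{-2|v|} e^{iuv}\,dv$, I would first rewrite
\[
G_{\chi_1,\chi_2}(x,T) = \int_{-\infty}^{\infty} e^{-2|v|}\, N_{\chi_1}(x,v,T)\, \overline{N_{\chi_2}(x,v,T)}\,dv,
\]
with $N_\chi(x,v,T) = \sum_{|\gamma|\le T} (xe^v)^{i\gamma}$. Weighting by $\bar\chi_1(a)\chi_2(a)$ and summing over characters modulo $q$ gives
\[
F_q(x,T) = \int_{-\infty}^{\infty} e^{-2|v|}\, \Big|\sum_{\chi \pmod{q}} \bar\chi(a)\, N_\chi(x,v,T)\Big|^2\,dv,
\]
which is a non-negative integral of a squared, character-weighted zero sum. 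Next, I would invoke the explicit formula for $L'/L(s,\chi^*)$ applied to each primitive character $\chi^*$ inducing $\chi$ modulo $q$, so that $N_\chi(x,v,T)$ is replaced by a Dirichlet polynomial over prime powers of length $\asymp xe^v$ plus an error term of size $O(\log(qT))$ uniformly in $v$.

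Expanding the square and applying character orthogonality collapses the double prime sum to one restricted by the congruence $n_1\equiv n_2\pmod{q}$; the diagonal $n_1=n_2$ produces the main contribution. After integrating in $v$ against $e^{-2|v|}$ (this Plancherel-type step generates the factor $T$ arising from the length of the truncated zero sum) and evaluating the resulting prime series via the standard computation, one recovers the asymptotic $\frac{\varphi(q)}{\pi}T\log x$, with the doubling of the constant (from $\varphi(q)/(2\pi)$ in \cite{yal} to $\varphi(q)/\pi$ here) reflecting the doubled diagonal count.

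The main obstacle lies in bounding the off-diagonal contribution ($n_1\neq n_2$ with $n_1\equiv n_2\pmod{q}$) together with the truncation and tail errors from the explicit formula, uniformly across the full $(q,T)$ range. Here Lemma \ref{gmslem} plays the key role: applied to the Dirichlet polynomial in prime powers, it yields a bound with a savings of $\log x$ compared with the Montgomery--Vaughan-based estimate used in \cite{yal}, and this is precisely what allows the lower bound on $T$ to descend from $(x/q)\log x$ to $x/\varphi(q)$. The upper bound $T\le\exp(x^{3/4})$ comes from the corrected tail estimate of the explicit formula noted in Footnote \ref{foot_typo}, where the exponent $1/4$ should read $3/4$. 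Finally, tracking uniformity in $q$ up to $\sqrt{x}\log^{-(2+\eps)}x$ requires careful bookkeeping of the contribution from imprimitive characters and the switch to primitive inducing characters; since $q^2\le x\log^{-(4+2\eps)}x$, the extra logarithmic savings in the range of $q$ are sufficient to absorb these contributions into the error.
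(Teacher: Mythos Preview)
Your proposal identifies the right ingredients (explicit formula, character orthogonality, the Goldston--Montgomery mean-value lemma, and the exponent correction from Footnote~\ref{foot_typo}), but the order in which you assemble them is wrong, and the central step is false as stated.

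The paper does \emph{not} start from the integral representation
\(F_q(x,T)=\int_{-\infty}^{\infty}e^{-2|v|}\,|\Sigma(x,T,v)|^2\,dv\) and then try to feed each truncated zero sum \(N_\chi(x,v,T)=\sum_{|\gamma|\le T}(xe^v)^{i\gamma}\) into the explicit formula. Your claim that \(N_\chi(x,v,T)\) can be replaced by a Dirichlet polynomial of length \(\asymp xe^v\) plus an \(O(\log(qT))\) error is not correct: the explicit formula gives such an approximation for the \emph{smoothed} sum \(\sum_{\gamma}\frac{x^{i\gamma}}{1+(t-\gamma)^2}\) (all zeros, weighted), not for a sharply truncated sum. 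Indeed, controlling \(\sum_{|\gamma|\le T}y^{i\gamma}\) pointwise to this accuracy would be much stronger than what is being proved. Relatedly, the factor \(T\) in the main term does not come from ``integrating in \(v\) against \(e^{-2|v|}\)'' (that integral has bounded mass); it comes from integrating \(|R_1(x,t)|^2\) over \(t\in[-T,T]\).

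What the paper actually does (following Montgomery and Y{\i}ld{\i}r{\i}m) is to start from the identity \eqref{eqn8},
\[
\Bigl|\sum_{\chi}\overline{\chi}(a)\sum_{\gamma}\frac{2x^{i\gamma}}{1+(t-\gamma)^2}\Bigr|^2=\bigl|R_1+R_2+R_3\bigr|^2,
\]
with the inner sum over \emph{all} zeros, and integrate both sides over \(t\in[-T,T]\). The left-hand side yields \(2\pi F_q(x,T)\) up to an error of size \(\varphi(q)^2\log T\log^2(qT)\) (equation~\eqref{sona}); the right-hand side is dominated by \(\int_{-T}^{T}|R_1|^2\,dt\), which is evaluated by Lemma~\ref{gmslem} applied to \(R_1\) as a Dirichlet series in \(t\) (Lemma~\ref{GM-lemma}). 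The asymptotic for \(S(x)\) via the prime number theorem in arithmetic progressions under GRH (equation~\eqref{sx}) is precisely what forces \(q\le\sqrt{x}\log^{-(2+\varepsilon)}x\); it is \emph{not} the imprimitive characters (their contribution \(R_4\) vanishes identically after orthogonality). The upper bound \(T\le\exp(x^{3/4})\) enters through the corrected estimate \eqref{eqn411} for \(\int|R_2|^2\), and the lower bound \(T\ge x/\varphi(q)\) is what makes the error terms in \eqref{eqn42} genuinely \(o(\varphi(q)T\log x)\).
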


Comparison of \eqref{with2} and \eqref{no2} shows that $F_q(x,T)$ grows twice as fast as $F_q^+(x,T)$.
This is due to the fact that we are summing the imaginary parts of the zeros over a range 
which is twice the one used in \eqref{with2}.

Note that in the $q$-aspect the right hand side
in \eqref{no2} is the square root of the trivial estimate \eqref{trivial}.
The orthogonality of the Dirichlet characters plays a key role in establishing this.

The consequences on the simplicity of the zeros mentioned in the first paragraph of this article aside,
it is not surprising that these results and conjectures can give information
on the distribution of primes too.
In 1978, Gallagher and Mueller \cite{GM}
were the first to establish a result
connecting the pair correlation of zeros of $\zeta(s)$ and prime number distribution.
They proved that assuming RH, and pair correlation
conjecture for zeros of Riemann zeta function,
\begin{equation}
\label{psilittleo}    
\psi(x) = x + o(\sqrt{x} \log^2 x)
\quad \textrm{as}\ x \to \infty.\footnote{We recall some standard prime number theory notations in Section \ref{S2}.}
\end{equation}
This improves on the classic result 
that, assuming RH, the above estimate holds with error term $O(\sqrt{x} \log^2 x)$.
In 1982, Heath-Brown \cite{HB} was able to obtain \eqref{psilittleo} 
assuming the weaker hypothesis $F(x, T)= o( T\log^2 T),$ as $T \to \infty$, uniformly for 
$ T\leq   x \leq  T^A$ for every constant $A \geq 1$.
In the same year, Goldston and Montgomery \cite[Thm.~2]{GM1984} established an equivalence 
between an asymptotic result for the distribution of primes and the pair correlation conjecture \eqref{PC-conjecture}. 
\Yildirim \cite{yal}, assuming Conjecture \ref{Yild_conj}, was able to 
establish, following the work of Goldston--Montgomery \cite{GM1984}, 
an asymptotic formula for the mean square of primes in arithmetic progressions.\footnote{However, it 
seems that \Yildirim did not take into account that a Dirichlet $L$-function may have a zero at $1/2$.}

We recall that in 1837, Dirichlet (see for
example \cite[Chp.~2]{nar}) proved that there are infinitely many primes congruent to
$a \pmod{q}$ for $(a,q)=1$. 
The method used by Hadamard to prove
the Prime Number Theorem
allowed him to show also, cf.~\cite[p.~206]{nar}, that
\begin{equation}
\label{dir}
 \psi(x;q,a) - \frac{x}{\varphi(q)} = o\Bigl(\frac{x}{q} \Bigr),
\end{equation}
as $x \to \infty.$ 
In 1936, Walfisz proved, for any fixed $A>0$, that 
\eqref{dir} holds uniformly for $q\leq  \log^A x$.
Under GRH,
for any $(a, q)= 1$ it is known, uniformly for
$1 \leq q < x $, that 
\begin{equation}
\label{gr1}
 \psi(x;q,a) - \frac{x}{\varphi(q)} = O\bigl( \sqrt{x}\, \log^2 x \bigr)
\end{equation}
as $x \to \infty.$ See for instance, \cite[p.~125]{Davenport}.
Moreover, estimate \eqref{gr1} implies that under GRH,
for any $\eps >0, (a, q)= 1$, the estimate \eqref{dir} holds
uniformly for $1 \leq q \leq \sqrt{x}/ \log^{2+\eps}x$,
as $x \to \infty.$

Friedlander and Granville \cite[Cor.~2]{FriedlanderG1989} proved that
the estimate \eqref{dir} cannot hold uniformly for the range
$1 \leq q < x/ \log^B x$, where $B>0$ is any arbitrary fixed number. 
In the same paper they also formulated the following corrected
form of another conjecture of Montgomery,
see \cite[p.~366]{FriedlanderG1989}.

\begin{conj}[Montgomery]
\label{Mont-primes-conj}
For any $\eps >0, (a, q)= 1$, uniformly for $1\leq q < x$ we have
\begin{equation}
\label{MFG-conj}
 \psi(x;q,a) - \frac{x}{\varphi(q)} \ll \sqrt{\frac{x}{q}}\, x^{\eps}. 
\end{equation}
\end{conj}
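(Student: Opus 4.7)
The plan is to derive Montgomery's conjecture~\eqref{MFG-conj} from the pair-correlation machinery of the preceding sections, following the Gallagher--Mueller paradigm adapted to the Dirichlet $L$-function setting.

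First, character orthogonality modulo $q$ combined with the truncated explicit formula under GRH (reducing imprimitive characters to their primitive inducers, at the cost of an $O((\log qx)^2)$ correction from the ramified primes) yields, for any cutoff $T$,
\[
\psi(x;q,a)-\frac{x}{\varphi(q)} = -\frac{1}{\varphi(q)}\sum_{\chi\bmod q}\overline{\chi}(a)\sum_{|\gamma_\chi|\le T}\frac{x^{\rho_\chi}}{\rho_\chi} + O\!\left(\frac{x(\log qxT)^2}{T}\right).
\]
Picking $T=\sqrt{xq}\,x^{\eps}$ forces the explicit-formula error to be $\ll\sqrt{x/q}\,x^{\eps/2}$, so the target~\eqref{MFG-conj} reduces to proving $|\Sigma(x,T)|\ll(\varphi(q)/\sqrt q)\,x^{\eps}$, where
\[
\Sigma(x,T) := \sum_{\chi\bmod q}\overline{\chi}(a)\sum_{|\gamma_\chi|\le T}\frac{x^{i\gamma_\chi}}{1/2+i\gamma_\chi}.
\]

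Second, I bring the pair-correlation function $F_q$ into play. Expanding $|\Sigma|^2$ gives a double sum with weight $1/((1/2+i\gamma_1)(1/2-i\gamma_2))$, which is not the Montgomery weight $W(\gamma_1-\gamma_2)$ built into $F_q$. I bridge the gap via the partial-fraction identity
\[
\frac{1}{(1/2+i\gamma_1)(1/2-i\gamma_2)} = \frac{1}{1+i(\gamma_1-\gamma_2)}\left(\frac{1}{1/2+i\gamma_1}+\frac{1}{1/2-i\gamma_2}\right),
\]
followed by Cauchy--Schwarz on one variable of summation, then Plancherel exploiting $\widehat W(\xi)\propto e^{-2|\xi|}$. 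This converts $|\Sigma(x,T)|^2$ (up to a manageable diagonal contribution $\sum_\chi\sum_{|\gamma_\chi|\le T}|\rho_\chi|^{-2}\ll\varphi(q)(\log qT)^2$) into an integral of $F_q(y,T)$ against a benign kernel for $y$ varying over a relevant range. The integral is then dissected according to the size of $y$ and estimated using the three available $F_q$-bounds: Theorem~\ref{yalmodifiedthm1} for $T\ge y/\varphi(q)$ gives the asymptotic $\varphi(q)T\log y/\pi$; Theorem~\ref{yalmodifiedcor} for $y\le T\le\exp(\sqrt y)$ gives $\varphi(q)T\log y$; and Conjecture~\ref{yalconj1} for $T<y\le x$ with $q\le y^{1-\eps}$ yields $\varphi(q)T\,y^\eps$. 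The dominant contribution comes from $y\asymp x$, and substituting $T=\sqrt{xq}\,x^\eps$ produces $|\Sigma(x,T)|^2\ll\varphi(q)^2\,x^{2\eps}/q$, whence the target estimate on $\Sigma$ and hence~\eqref{MFG-conj}.

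The hard part will be the weight-exchange in the second step: the natural Fourier-analytic object attached to $|\Sigma|^2$ is not $F_q$ itself but a variant with weight $1/(\rho_1\overline{\rho_2})$, and translating between the two while preserving the crucial $x^\eps$-saving requires a careful Plancherel argument in the spirit of Gallagher--Mueller~\cite{GM} and Goldston--Montgomery~\cite{GM1984}, assisted by the trivial bound~\eqref{trivial} to absorb the regime of small $y$ where no pair-correlation estimate is available. A secondary nuisance is patching the three $F_q$-estimates across the boundaries of their domains of validity, particularly when $q$ approaches $x^{1-\eps}$: there Theorem~\ref{yalmodifiedthm1} does not cover the full range of $y$ and one must rely almost entirely on Conjecture~\ref{yalconj1}, whose hypothesis $q\le y^{1-\eps}$ pins $y$ to a narrow window near $x$.
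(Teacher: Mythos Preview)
Your proposal has a genuine gap in the treatment of low-lying zeros, and this is precisely the obstacle that forces the paper's approach to differ from the Gallagher--Mueller paradigm you are invoking.

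Concretely: you fix $T=\sqrt{xq}\,x^{\eps}$ and aim to bound $\Sigma(x,T)$ directly by passing to an integral of $F_q(y,T)$ over $y$. No matter how you execute the weight exchange (partial fractions, Plancherel, or the cleaner representation $\frac{x^{i\gamma}}{1/2+i\gamma}=\int_0^\infty e^{-u/2}(xe^{-u})^{i\gamma}\,du$), the resulting integral will involve $F_q(y,\cdot)$ for $y$ ranging down towards $0$. For $y<q^{1/(1-\eps)}$ the hypothesis $q\le y^{1-\eps}$ of Conjecture~\ref{yalconj1} fails, and you are left with only the trivial bound~\eqref{trivial}. A direct check shows that the small-$y$ contribution bounded this way is of order $q^{1/2}x^{\eps}$, whereas the target is $\varphi(q)q^{-1/2}x^{\eps}$; these agree only for $q\ll x^{1/2}$, so your argument cannot reach $q\le x^{1-\eps}$. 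Your remark that the trivial bound ``absorbs'' the small-$y$ regime is therefore incorrect once $q$ exceeds $\sqrt{x}$. (The partial-fraction identity you write also does not produce the Montgomery weight $W(u)=4/(4+u^2)$ but rather $1/(1+iu)$; the conversion to $F_q$ is not as you describe.)

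The paper sidesteps this completely by \emph{not} bounding $\Sigma(x,T)$ at a single $x$. Instead it telescopes $\psi(x;q,a)-x/\varphi(q)$ into dyadic pieces
\[
\sum_{j=0}^{J}\Bigl(\psi(x/2^j;q,a)-\psi(x/2^{j+1};q,a)-\tfrac{x}{2^{j+1}\varphi(q)}\Bigr)+\Bigl(\psi(x/2^{J+1};q,a)-\tfrac{x}{2^{J+1}\varphi(q)}\Bigr),
\]
stopping at the largest $J$ with $(x/2^{J})^{1-\eps}\ge q$. Each dyadic piece is controlled by Lemma~\ref{lem21}: the high zeros via partial summation and the Sobolev--Gallagher inequality (Lemma~\ref{lem3}), and the low zeros $|\gamma|\le U=x^{\eps}$ via $I(2y,q,U)-I(y,q,U)=\frac{1}{\varphi(q)}\int_y^{2y}w^{-1/2}\Sigma(w,U,0)\,dw$ together with Lemma~\ref{lemlp}, which only needs $F_q(y,U)$ at the current scale $y\ge q^{1/(1-\eps)}$, where Conjecture~\ref{yalconj1} \emph{does} apply. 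The crucial point is that the telescoping is cut off before $y$ drops below $q^{1/(1-\eps)}$, and the remaining tail $\psi(x/2^{J+1};q,a)-x/(2^{J+1}\varphi(q))$ is estimated by the Brun--Titchmarsh theorem, not by pair correlation at all. This dyadic descent in $\psi$ combined with an arithmetic (Brun--Titchmarsh) endpoint is the missing idea in your plan; without it, the range $x^{1/2}<q\le x^{1-\eps}$ is inaccessible.
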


Our main result states that if both GRH and Conjecture \ref{yalconj1} hold true, 
then the estimate \eqref{MFG-conj} 
holds in almost the whole $q$-range given in 
Montgomery's Conjecture \ref{Mont-primes-conj}.
Since for $1\leq  q \leq  x^\eps$ it is clear that equation \eqref{MFG-conj} follows immediately from \eqref{gr1}, 
it suffices to restrict our analysis to the range $x^\eps<q<x$.
\begin{Thm}\label{thmfullrange}
Let $\eps>0$ be arbitrary and fixed and $(a, q)= 1$. 
Assume that both GRH and Conjecture \ref{yalconj1} 
hold true. Then, the upper bound \eqref{MFG-conj} holds
for $x$ sufficiently large and
uniformly for $1 \leq  q \leq x^{1-\eps}$.
\end{Thm}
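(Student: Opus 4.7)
My plan is to deduce the pointwise estimate \eqref{MFG-conj} from the upper bounds on $F_q(x,T)$ provided by Theorem~\ref{yalmodifiedcor} and Conjecture~\ref{yalconj1} by passing through a short-interval mean-square estimate for $\psi(y;q,a) - y/\varphi(q)$, and then invoking the monotonicity of $\psi(\,\cdot\,;q,a)$. This parallels Heath-Brown's sharpening \cite{HB} of the Gallagher--Mueller theorem and the Goldston--Montgomery equivalence \cite{GM1984}, transplanted to the arithmetic-progression setting in the spirit of Y{\i}ld{\i}r{\i}m~\cite{yal}.

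The truncated explicit formula under GRH gives, for $y$ of order $x$,
\[
\psi(y;q,a) - \frac{y}{\varphi(q)} = -\frac{\sqrt{y}}{\varphi(q)} \sum_{\chi \pmod*{q}} \overline{\chi}(a) \sum_{|\gamma|\le T} \frac{y^{i\gamma}}{\tfrac{1}{2}+i\gamma} + O\!\left(\frac{y\log^2(qyT)}{T}\right).
\]
Choosing $T$ just above $\sqrt{xq}$, say $T = \sqrt{xq}\,x^{\eps/4}$, absorbs the error into the target bound $\sqrt{x/q}\,x^{\eps}$. Since $\psi(\,\cdot\,;q,a)$ is non-decreasing, for any $H>0$ and any $y \in [x-H,\,x+H]$,
\[
|\psi(x;q,a) - x/\varphi(q)| \le \bigl|\psi(y;q,a) - y/\varphi(q)\bigr| + \frac{H}{\varphi(q)},
\]
so Cauchy--Schwarz on the average over $y$ reduces the problem to proving the short-interval mean-square bound
\[
\frac{1}{H}\int_{x-H}^{x+H} \bigl|\psi(y;q,a) - y/\varphi(q)\bigr|^2 dy \;\ll\; \frac{x}{q}\,x^{2\eps},
\]
with $H$ of order $\varphi(q)\sqrt{x/q}\,x^{\eps}$, which in turn makes $H/\varphi(q)$ admissible.

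To estimate the mean square, I would substitute the explicit formula, expand the square, and integrate in $y$. Using the Fourier representation $W(u) = \int_{\mathbb{R}} e^{-2|v|} e^{iuv}\,dv$ one verifies the Parseval-type identity
\[
F_q(y,T) = \int_{\mathbb{R}} e^{-2|v|}\, |R(ye^v,T)|^2\, dv, \qquad R(y,T) := \sum_{\chi \pmod*{q}} \overline{\chi}(a) \sum_{|\gamma|\le T} y^{i\gamma},
\]
so that $F_q(y,T)$ dominates the log-localised $L^2$-average of $R(\,\cdot\,,T)$ around $y$. A smoothed truncation at $|\gamma|=T$ (replacing the sharp cutoff by a test function with rapidly decaying Mellin transform) then permits one to dominate the short-interval second moment by a weighted integral involving $F_q(x,T)$. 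Applying Conjecture~\ref{yalconj1} in the range $x^{\eps}\le T < x$ (where $F_q \ll \varphi(q) T x^{\eps}$) together with Theorem~\ref{yalmodifiedcor} in the complementary range $x \le T \le \exp(\sqrt{x})$ (where $F_q \ll \varphi(q) T \log x$), and then optimising in $T$, yields the required bound on the mean square.

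The main obstacle is the conversion step between the short-interval second moment of $\psi(y;q,a) - y/\varphi(q)$ and the pair correlation function $F_q(x,T)$: the $H$-localisation interacts nontrivially with the weight $\left|\tfrac{1}{2}+i\gamma\right|^{-2}$ coming from the explicit formula, and a naive Cauchy--Schwarz loses factors that are fatal in the critical regime $q$ close to $x^{1-\eps}$. Handling this requires a smooth truncation in the height $T$ together with careful treatment of the diagonal $\gamma_1=\gamma_2$ contribution, so that the orthogonality of characters built into $F_q$ delivers the clean saving $\sqrt{x/q}$ and not merely $\sqrt{x/\varphi(q)}$. Verifying that the ranges provided by Theorem~\ref{yalmodifiedcor} and Conjecture~\ref{yalconj1} suffice to cover all relevant choices of $T$, uniformly for $1 \le q \le x^{1-\eps}$, is the technical core of the argument.
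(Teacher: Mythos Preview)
Your proposal takes a different route from the paper and leaves the decisive step unresolved. The paper does not pass through a short-interval mean square at all. Instead it telescopes dyadically,
\[
\psi(x;q,a) - \frac{x}{\varphi(q)} = \sum_{j=0}^{J}\Bigl(\psi\bigl(\tfrac{x}{2^j};q,a\bigr) - \psi\bigl(\tfrac{x}{2^{j+1}};q,a\bigr) - \tfrac{x}{2^{j+1}\varphi(q)}\Bigr) + \Bigl(\psi\bigl(\tfrac{x}{2^{J+1}};q,a\bigr) - \tfrac{x}{2^{J+1}\varphi(q)}\Bigr),
\]
with $J$ maximal so that $(x/2^J)^{1-\eps}\ge q$; the tail is disposed of by Brun--Titchmarsh since $x/2^{J+1}\asymp q^{1/(1-\eps)}$. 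Each dyadic piece is controlled, via \eqref{eqn1}, by the difference $I(x/2^j,q,T)-I(x/2^{j+1},q,T)$ with $I(y,q,T)=\varphi(q)^{-1}\sum_\chi\overline{\chi}(a)\sum_{|\gamma|\le T}y^{\rho}/\rho$. This difference is split at height $U=x^\eps$: the high zeros $U<|\gamma|\le T$ are handled by partial summation together with the Sobolev--Gallagher pointwise bound $|\Sigma(y,T,0)-\Sigma(y,U,0)|^2\ll T\max_{U\le t\le T}F_q(y,t)$ (Lemma~\ref{lem3}); the low zeros are handled by writing $I(2y,q,U)-I(y,q,U)=\int_y^{2y}w^{-1/2}\Sigma(w,U,0)\,dw$, then Cauchy--Schwarz and the Parseval bound $\int_y^{2y}|\Sigma(w,U,0)|^2\,dw\ll yF_q(y,U)$ (Lemma~\ref{lemlp}). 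Both pieces are $\ll\sqrt{x/(2^jq)}\,x^\eps$ by Conjecture~\ref{yalconj1}, and the geometric sum over $j$ converges.

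The gap in your sketch is precisely where you place the ``main obstacle'': converting a mean square over $[x-H,x+H]$ into a bound involving $F_q$. The Parseval identity you quote relates $F_q(y,T)$ to an $L^2$-average of $\Sigma(ye^v,T,0)$ over $v\in\mathbb{R}$ with weight $e^{-2|v|}$, i.e.\ over a \emph{multiplicative} window of length $O(1)$; your additive window $H\asymp\varphi(q)\sqrt{x/q}\,x^\eps$ can be tiny on the multiplicative scale (for bounded $q$ one has $H/x\asymp x^{-1/2+\eps}$), while for $q$ near $x^{1-\eps}$ the constraint $H/\varphi(q)\ll\sqrt{x/q}\,x^\eps$ allows $H$ larger than $x$, so the ``short interval'' is no longer short. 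Bridging the small-$q$ regime would require a genuine Tauberian lemma of Goldston--Montgomery type, which your smoothed-truncation remark does not supply. The paper's dyadic-difference device sidesteps all of this because the only integrals needed are over full intervals $[y,2y]$, where Lemma~\ref{lemlp} applies directly.

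A minor point: the paper's proof uses only GRH and Conjecture~\ref{yalconj1}. Theorem~\ref{yalmodifiedcor} (the range $T\ge x$) plays no role, since in the dyadic scheme one always takes $T\le x/2^j$.
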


Another very important conjecture on the distribution of primes is:

\begin{conj}[Elliott--Halberstam \cite{ElliottH1968/69}]
\label{EHconjecture}
For every $\eps>0$ and $A>0$,
\[
\sum_{q\leq  x^{1-\eps}} \max_{(a,q)=1}\Bigl \vert \,\psi (x;q,a)-{\frac {x}{\varphi (q)}}\Bigr \vert  
\ll_{A,\eps} 
\frac{x}{\log^A x}.
\]
\end{conj}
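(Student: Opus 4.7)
The plan is to derive Conjecture \ref{EHconjecture} as a short corollary of Theorem \ref{thmfullrange} by summing the pointwise estimate \eqref{MFG-conj} over $q$. The key point is that the bound $\sqrt{x/q}\,x^{\delta}$ in \eqref{MFG-conj} is power-saving, so even summed over the long range $q\le x^{1-\eps}$ it leaves a positive power of $x$ to absorb the length of the sum. Because Theorem \ref{thmfullrange} is uniform in the residue class $a$ (the implied constant depends only on $\eps$), we may take the maximum over $a$ with $(a,q)=1$ inside the sum without any extra cost, so no large-sieve or Bombieri-Vinogradov style averaging is needed for this final step.

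Concretely, fix $\eps>0$ and $A>0$, and apply Theorem \ref{thmfullrange} with the parameter there taken to be $\eps/2$; this is legal since the allowed range $1\le q\le x^{1-\eps/2}$ contains $1\le q\le x^{1-\eps}$. Take the free exponent in \eqref{MFG-conj} to be $\eps/4$. Then, uniformly for $1\le q\le x^{1-\eps}$ and $(a,q)=1$,
\[
\Bigl|\,\psi(x;q,a)-\frac{x}{\varphi(q)}\Bigr| \ll \sqrt{\frac{x}{q}}\,x^{\eps/4}.
\]
The right-hand side is independent of $a$, so the same bound controls $\max_{(a,q)=1}|\psi(x;q,a)-x/\varphi(q)|$. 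Summing over $q$ and using the elementary estimate $\sum_{q\le y} q^{-1/2}\ll y^{1/2}$ with $y=x^{1-\eps}$,
\[
\sum_{q\le x^{1-\eps}}\max_{(a,q)=1}\Bigl|\,\psi(x;q,a)-\frac{x}{\varphi(q)}\Bigr|
\ll x^{1/2}\,x^{\eps/4}\sum_{q\le x^{1-\eps}} q^{-1/2}
\ll x^{1/2}\,x^{\eps/4}\,x^{(1-\eps)/2}
= x^{1-\eps/4}.
\]
Since $x^{1-\eps/4}=o_{A,\eps}(x/(\log x)^{A})$ for every $A>0$, Conjecture \ref{EHconjecture} follows.

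There is essentially no obstacle in this deduction; the entire difficulty of the paper is already packed into Theorem \ref{thmfullrange}, which in turn rests on GRH and on the pair-correlation input from Conjecture \ref{yalconj1}. It is worth emphasising that the passage actually yields much more than the logarithmic saving demanded by Conjecture \ref{EHconjecture}: the bound obtained is of strength $O(x^{1-\eps/4})$, i.e.\ a genuine power saving over $x$ on the entire Elliott-Halberstam sum. In this sense, the conditional result is of Bombieri-Vinogradov-beyond-$\sqrt{x}$ type, and its quality is limited by the quality of the pointwise bound \eqref{MFG-conj} rather than by any averaging argument over $q$ or $a$.
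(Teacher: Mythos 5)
Your argument is correct and follows exactly the route the paper intends for Corollary \ref{EH-type-estim}, which it declares a ``straightforward'' consequence of Theorem \ref{thmfullrange}: apply the pointwise bound \eqref{MFG-conj} (uniform in $a$), sum over $q$ using $\sum_{q\le y}q^{-1/2}\ll y^{1/2}$, and observe that the resulting power saving trivially beats any power of $\log x$. One small imprecision worth fixing: in Theorem \ref{thmfullrange} a single $\eps$ governs both the exponent $x^{\eps}$ in \eqref{MFG-conj} and the uniformity range $q\le x^{1-\eps}$, so you should not choose them independently as ``$\eps/2$'' and ``$\eps/4$''; simply invoke the theorem with parameter $\eps/4$, obtaining the bound $\sqrt{x/q}\,x^{\eps/4}$ valid for $q\le x^{1-\eps/4}$, which covers $q\le x^{1-\eps}$, and the rest of your calculation goes through unchanged and yields the paper's Corollary \ref{EH-type-estim} after the dictionary $\eps\leftrightarrow 4\eps$.
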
 
If true, this is close to best possible, as Friedlander and Granville \cite{FriedlanderG1989}, using the
Maier matrix method \cite{Maier}, showed the conjecture to be false when $x^{1-\epsilon}$ is 
replaced by  $x/\log^{B}x$, with $B>0$ arbitrary. Indeed, Maier's matrix method can be used 
to establish limitations of a very successful probabilistic 
prime number distribution model of Cram\'er, cf.~Granville \cite{Granvillecramer}.

So far, an estimate 
such as the one in Conjecture \ref{EHconjecture}, is known only in
the interval  $q \leq \sqrt{x}/ \log^{B}x$, with $B>0$ (thanks to the Bombieri--Vinogradov Theorem).

Theorem \ref{thmfullrange} has the following straightforward 
elegant corollary supporting the Elliott--Halberstam conjecture.
\begin{Cor}
\label{EH-type-estim}
Let $\eps>0$ be arbitrary and fixed and $(a, q)= 1$. 
Assume that both GRH and Conjecture \ref{yalconj1} 
hold true. Then,
for $x$ sufficiently large, we have 
\[
\sum_{q\leq  x^{1-4\epsilon}} 
\max_{(a,q)=1}\,
\Bigl \vert \, \psi (x;q,a)-{\frac {x}{\varphi (q)}}\Bigr \vert  
\ll
x^{1-\eps}.
\]
\end{Cor}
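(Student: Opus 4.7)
The plan is to deduce Corollary \ref{EH-type-estim} as an immediate consequence of the pointwise bound supplied by Theorem \ref{thmfullrange}; no new analytic input is required beyond that theorem and the elementary estimate $\sum_{q\le Q}q^{-1/2}\ll Q^{1/2}$. Conceptually, the corollary is just the observation that the Montgomery--Friedlander--Granville bound \eqref{MFG-conj}, once granted pointwise in the full range $q\le x^{1-\eps}$, is strong enough that the Elliott--Halberstam-style sum can be handled term by term, with room to spare coming from the slightly smaller range $q\le x^{1-4\eps}$.

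More concretely, I would first invoke Theorem \ref{thmfullrange} with its $\eps$-parameter equal to the $\eps$ appearing in the corollary. Every $q$ in the sum satisfies $q\le x^{1-4\eps}\le x^{1-\eps}$, so the theorem gives, uniformly for $(a,q)=1$,
\[
\Bigl|\psi(x;q,a)-\frac{x}{\varphi(q)}\Bigr|\ll \sqrt{\frac{x}{q}}\, x^{\eps}.
\]
Because the right-hand side is independent of $a$, this bound also controls $\max_{(a,q)=1}$ on the left, so the outer maximum in the corollary costs nothing.

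Summing over $1\le q\le x^{1-4\eps}$ and applying $\sum_{q\le Q}q^{-1/2}\ll Q^{1/2}$ with $Q=x^{1-4\eps}$ then produces a total of order $x^{1/2+\eps}\cdot x^{(1-4\eps)/2}=x^{1-\eps}$, which is precisely the claimed bound. The buffer of $3\eps$ in the exponent between the $q$-range of Theorem \ref{thmfullrange} and the $q$-range of the corollary is exactly what absorbs both the $x^{\eps}$ loss from the pointwise estimate and the $x^{(1-4\eps)/2}$ arising from $\sum q^{-1/2}$. There is no real obstacle here: all the conceptual and technical work has already been done inside Theorem \ref{thmfullrange} (and thus ultimately inside GRH and Conjecture \ref{yalconj1}), and the passage from that theorem to Corollary \ref{EH-type-estim} is a one-line summation.
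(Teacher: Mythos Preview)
Your proposal is correct and is precisely the intended ``straightforward'' derivation the paper has in mind: the corollary is stated immediately after Theorem~\ref{thmfullrange} with no separate proof, and your term-by-term summation of the pointwise bound $\sqrt{x/q}\,x^{\eps}$ over $q\le x^{1-4\eps}$, using $\sum_{q\le Q}q^{-1/2}\ll Q^{1/2}$, is exactly the one-line computation that justifies it.
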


Our next result is about estimating the number
of Dirichlet $L$-functions modulo $q$ that might vanish 
at $1/2$. Chowla conjectured that 
this never happens in case 
$\chi$ is a quadratic character.
\begin{Thm}
\label{thm-Lzero-onehalf}
Let $\eps>0$ be arbitrary and fixed and $q$ be sufficiently large. 
Assume that both GRH and  Conjecture \ref{yalconj1} 
for $a=1$ hold, then 
\begin{equation*}
\#\Bigl\{\chi \pmod{q} \colon L\Bigl(\frac{1}{2}, \chi\Bigr)=0\Bigr\}
\ll 
q^{1/2+\eps}.
\end{equation*}
\end{Thm}
Note that $a$ in Conjecture \ref{yalconj1} is general, but here it suffices to restrict to $a=1$ only.
For ease of notation we will put
\begin{equation}
\label{zero-set-def}
Z_q\coloneq 
\Bigl\{\chi \pmod{q} \colon L\Bigl(\frac{1}{2}, \chi\Bigr)=0\Bigr\}.
\end{equation}

For an illuminating discussion of the role of a possible zero of Dirichlet $L$-functions
in $s=1/2$ in Montgomery's Conjecture \ref{Mont-primes-conj},
we refer to the introduction of Fiorilli's paper \cite{Fiorilli2015}. 
Further, we point out that Theorem \ref{thm-Lzero-onehalf} implies 
Theorem 1.3 of \cite{Fiorilli2015}.

\begin{Rem}
It is also possible to show that a  weaker form of \eqref{MFG-conj} 
follows by assuming a weaker form of the pair correlation conjecture 
for the Dirichlet $L$-functions. This weaker form involves an additional 
factor $g(q)$ satisfying $1\leq  g(q)\leq  \varphi(q)$ in the upper bound,
the original Conjecture \ref{yalconj1}
arising on taking $g(q)=1$
and, essentially, the trivial estimate \eqref{trivial} on choosing $g(q)=\varphi(q)$.
To be specific, if we replace $\varphi(q)T x^{ \eps}$ in both upper bounds 
in Conjecture \ref{yalconj1} 
with $\varphi(q) g(q) T x^{ \eps}$, the following modifications hold 
with proofs similar to those in Section~\ref{S4}.
\begin{enumerate}[a)]
\item 
We can obtain a result analogue of Theorem \ref{thmfullrange} 
 in which the final estimate is replaced by
\[
 \psi(x;q,a) - \frac{x}{\varphi(q)} \ll  \sqrt{\frac{x g(q)}{q}}\ x^{\eps};
\]

\item
we can obtain a result analogue of Theorem \ref{thm-Lzero-onehalf} 
 in which the final estimate is replaced by
\[
\#\Bigl\{\chi \pmod{q} \colon L\Bigl(\frac{1}{2}, \chi\Bigr)=0\Bigr\}
\ll 
(qg(q))^{1/2+\eps},
\]
which is non-trivial only for $g(q)\ll q^{1-\eps}$;

\item
moreover, if
$g(q)= \varphi(q)^\alpha$, $0 \leq  \alpha \leq  1$,
we can obtain the following analogue of Corollary \ref{EH-type-estim}:
\[
\sum_{q\leq  Q} 
\max_{(a,q)=1}\,
\Bigl \vert \, \psi (x;q,a)-{\frac {x}{\varphi (q)}}\Bigr \vert  
\ll
x^{1-\eps}
\]
for $Q\coloneq Q(\alpha,x)\leq  x^{1/(1+\alpha)-4\eps}$.
Note that
$x^{1/2 - 4\eps}\leq  Q \leq  x^{1 - 4\eps}$ for such $\alpha$'s.
\end{enumerate}
\end{Rem}

\medskip

 The paper is organised as follows. In Section \ref{S2}, we recall existing results we need to prove 
our theorems. In Section \ref{S3}, we prove Theorems \ref{yalmodifiedcor}-\ref{yalmodifiedthm1} 
and Section \ref{S4} is devoted to proofs of the main results of this paper, i.e, 
Theorems \ref{thmfullrange} and \ref{thm-Lzero-onehalf}. 
The crux of the proof of Theorem \ref{thmfullrange} lies in the following observations. 
As noted earlier, the estimate in Conjecture \ref{yalconj1}
represents a substantial improvement compared 
to the trivial bound in both the  $q$ and $T$ aspects when discussing the pair correlation of 
zeros of Dirichlet  $L$-functions (see \eqref{trivial}, \eqref{turkey} \& \eqref{turkey2}). 
On the other hand, it is relatively less significant in the context of the correlation of 
zeros of the Riemann zeta function. This leads to the following key point: while proving 
Theorem \ref{thmfullrange}, the analysis proceeds smoothly in case of ``not too small'' zeros,
aligning with the case of modulus $1$ (see \cite[Thm.~1]{HB}). In this zero-range,
it is enough to use $F^+_q(x,T)$ since $L(1/2 + i\gamma, \chi) =0$ 
if and only if $L(1/2 - i\gamma, \overline{\chi}) =0$, see Lemma \ref{conj-trick-lemma}. 
However, when accounting 
for the contributions of low-lying zeros, in contrast to the 
modulus $1$ case, the trivial estimate for $F^+_q(x,T)$ 
becomes inadequate. 
To address this difficulty, we need to use a suitable conjectural bound for a form 
of the pair correlation function built with zeros having imaginary parts
in a symmetric interval centered at $0$, namely $F_q(x,T)$, in order to
dissect the problem into dyadic intervals and subsequently 
transition from local to global intervals.
Moreover, the necessity of working with $F_q(x,T)$ connects
our setting to the possible existence of a zero attained at $1/2$ for Dirichlet $L$-functions
and leads to the proof of Theorem \ref{thm-Lzero-onehalf}.
For more details, please refer to Section \ref{S4}.

\section{Preliminaries}\label{S2}
\subsection{Prime number distribution}
In this section, we recall the material we need on the distribution of prime numbers, using the notations
\begin{equation*}
\pi(t) \coloneq \sum_{p \leq  t} 1, \quad \quad
\pi(t;q,a) \coloneq \sum_{\substack{p \leq  t \\ p \equiv a  \pmod{q}}} 1,
\end{equation*}
and
\begin{equation*}
\psi(t) \coloneq \sum_{n \leq  t} \Lambda(n), \quad \quad
\psi(t;q,a) \coloneq \sum_{\substack{n \leq  t \\ n \equiv a  \pmod{q}}} \Lambda(n),  
\quad \quad 
\psi(t,\chi) \coloneq \sum_{n \leq t} \Lambda(n) \chi(n),
\end{equation*}
where $\Lambda$ denotes the von Mangoldt function and $\chi$ a Dirichlet character
modulo $q$.
Two equivalent forms of the Prime Number theorem are
\begin{equation}
\label{pnt}
\pi(t)\sim
\int_{2}^{t} \frac{\, \dx u}{\log u}
\quad
\textrm{and}
\quad
\psi(t)\sim t
\quad (t\to \infty).
\end{equation}
For fixed coprime integers $a$ and $q$, we have asymptotic equidistribution: 
\begin{equation}
\label{equi}
\pi(t;q,a)\sim \frac{\pi(t)}{\varphi(q)},
\quad
\textrm{and}
\quad
\psi(t;q,a)\sim \frac{\psi(t)}{\varphi(q)}
\quad (t\to \infty),
\end{equation}
with $\varphi$ Euler's totient.
Furthermore, under GRH,
for any $(a, q)= 1$ and fixed constant $A>0$, uniformly for $q\leq \sqrt{x} \log^{-A} x$ we have 
\begin{equation}
\label{pnterror}
\pi(t;q,a)= \frac{1}{\varphi(q)} \int_{2}^{t} \frac{\, \dx u}{\log u}  + O(\sqrt{t}\log t),
\end{equation}
as  $t \to \infty$.
In addition, for the principal character $\chi_0 \pmod{q}$, we have
\begin{equation}
\label{sip}
   \psi(t,\chi_0)-\psi(t) = O((\log q)( \log t))\quad (t \to \infty).
\end{equation}

An important tool we will use is the following theorem (for a proof, see, e.g.,
Montgomery--Vaughan \cite[Sect.~3--6]{MVsieve}).
\begin{CThm}[Brun--Titchmarsh theorem]
\label{BT-thm}
Let $x,y>0$ and $a,q$ be coprime positive integers.
Then, uniformly for all $y>q$, we have
\begin{equation*} 
\pi(x+y;q,a) - \pi(x;q,a) < \frac{2y}{\varphi(q) \log(y/q)}.
\end{equation*}
\end{CThm}

Starting point for our deliberations is an explicit truncated form of the 
von Mangoldt explicit formula, which we state in the classical, respectively 
Dirichlet $L$-function case.
\begin{lem}\cite[Ch.~17]{Davenport}.
\label{davenlem2}
Let $2 \leq Z \leq x$. Assuming RH, we 
have, as $ x \to \infty,$
\[
\psi(x) = x - \sum_{\substack{ \vert \gamma \vert  \leq  Z \\
\zeta(1/2 + i\gamma) = 0}} \frac{x^{1/2 + i\gamma}}{1/2 + i\gamma} +
O\Bigl(\frac{x \log^2 (xZ)}{Z} \Bigr).
\]
\end{lem}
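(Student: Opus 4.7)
The plan is to derive this truncated explicit formula via Perron's formula followed by a contour shift that picks up the nontrivial zeros as residues, which is the classical strategy going back to von Mangoldt.

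First I would apply the truncated Perron formula to the Dirichlet series $-\zeta'(s)/\zeta(s) = \sum_n \Lambda(n)\, n^{-s}$, taking the line of integration at $c=1+1/\log x$ and truncation height $T$ (which will be set to $Z$ at the end). This gives
\[
\psi(x) = \frac{1}{2\pi i}\int_{c-iT}^{c+iT}\Bigl(-\frac{\zeta'(s)}{\zeta(s)}\Bigr)\frac{x^s}{s}\,ds + R(x,T),
\]
with the standard Perron error bound $R(x,T) \ll x\log^2(xT)/T$, obtained by splitting $\sum_n \Lambda(n)(x/n)^c \min\bigl(1,\,1/(T|\log(x/n)|)\bigr)$ into the ranges $|n-x|\le x/2$ (treating the very close terms separately using the $\Lambda$-values near $x$) and $|n-x|>x/2$.

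Next I would close the contour to the left along a rectangle with corners $c\pm iT$ and $-U\pm iT$ for large $U$, and invoke the residue theorem. The simple pole of $-\zeta'/\zeta$ at $s=1$ supplies the main term $x$. Under RH the nontrivial zeros all lie on $\mathrm{Re}(s)=1/2$, and those with $|\gamma|\le T$ contribute precisely $-\sum_{|\gamma|\le T} x^{1/2+i\gamma}/(1/2+i\gamma)$, which is the sum in the statement. The remaining residues are the pole of $1/s$ at $s=0$ giving $\log(2\pi)$ and the trivial zeros at $s=-2k$ giving $\tfrac{1}{2}\log(1-x^{-2})$; both are $O(1)$ and hence absorbed into the error for $T\le x$.

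The substantive work is bounding the three remaining sides of the rectangle. On the far left vertical line $\mathrm{Re}(s)=-U$ the functional equation controls $|\zeta'/\zeta(-U+it)|$ polynomially in $U$ and $|t|$, while the factor $x^{-U}$ sends this contribution to zero as $U\to\infty$. The crucial estimate is on the horizontal segments $\mathrm{Im}(s)=\pm T$: this is the main technical obstacle, because $\zeta'/\zeta$ can blow up near an ordinate of a zero. The resolution is the standard identity $\zeta'/\zeta(s) = \sum_{|\gamma-t|\le 1}1/(s-\rho) + O(\log t)$ combined with the zero-count bound $N(t+1)-N(t)\ll \log t$. Choosing $T$ to lie at distance $\gg 1/\log T$ from every ordinate (which is always possible by perturbing $T$ by at most $1$, harmless for the error term) yields $|\zeta'(s)/\zeta(s)|\ll \log^2 T$ uniformly across the horizontal segment. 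Integrating $x^s/s$ across $-U\le\sigma\le c$ then contributes $\ll x^c\log^2 T/T = O(x\log^2(xT)/T)$. Setting $T=Z$ and combining this with the Perron error, the residues at $s=0$ and at the trivial zeros, and the negligible far-left integral, delivers the claimed formula.
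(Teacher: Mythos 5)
Your proposal is correct and reproduces the classical Riemann--von~Mangoldt argument; the paper does not give its own proof but simply cites Davenport, Chapter~17, where exactly this Perron-plus-contour-shift derivation (with the $\zeta'/\zeta$ partial-fraction identity controlling the horizontal segments for a well-spaced truncation height) is carried out. The only point you gloss over is that after perturbing the truncation height by at most~$1$ to avoid ordinates of zeros, one must convert the resulting sum back to $|\gamma|\le Z$, which introduces an extra $O(\sqrt{x}\log Z/Z)\ll x\log^2(xZ)/Z$ for $Z\le x$; this is standard and easily repaired.
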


\begin{lem}\cite[Ch.~19]{Davenport}.\label{davenlem}
If $\chi$ is a non principal 
character modulo $q$ and $2 \leq Z \leq$ $x$, then assuming GRH, we have, as $ x \to \infty,$
\begin{equation}
\label{explform-all-gammas}
\psi(x,\chi) = - \sum_{\substack{ \vert \gamma \vert  \leq  Z 
}} \frac{x^{1/2 + i\gamma}}{1/2 + i\gamma} +
O\Bigl(\frac{x \log^2 (qxZ)}{Z} \Bigr).
\end{equation}
\end{lem}

Since clearly $L(1/2 + i\gamma, \chi) = 0$ if and only if 
$L(1/2 - i\gamma, \overline{\chi}) = 0$, we have
\begin{align}
\notag
\sum_{\substack{-Z \leq  \gamma \leq  0 \\L(1/2 + i\gamma, \chi) = 0
}} \frac{x^{1/2 + i\gamma}}{1/2 + i\gamma}
&=
\sum_{\substack{0 \leq  \delta \leq  Z \\L(1/2 - i \delta, \chi) = 0
}} \frac{x^{1/2 - i\delta}}{1/2 - i\delta}
=
\sum_{\substack{0 \leq  \delta \leq  Z \\L(1/2 + i\delta, \overline{\chi}) = 0
}} \frac{x^{1/2 - i\delta}}{1/2 - i\delta}
\\&
\label{conj-trick}
=
\sum_{\substack{0 \leq  \delta \leq  Z \\L(1/2 + i\delta, \overline{\chi}) = 0
}} 
\overline{\Bigl(\frac{x^{1/2 + i\delta}}{1/2 + i\delta}\Bigr)}.
\end{align}

This allows us to rewrite \eqref{explform-all-gammas} as 
\begin{equation}
\label{explform-positive-gammas}
\psi(x,\chi) = -\!\!\!\sum_{\substack{0 < \gamma \leq  Z \\ L(1/2 + i\gamma, \chi) = 0
}} \frac{x^{1/2 + i\gamma}}{1/2 + i\gamma} 
-\!\!\!
\sum_{\substack{0 \leq  \gamma \leq  Z \\L(1/2 + i\gamma, \overline{\chi}) = 0
}} 
\overline{\Bigl(\frac{x^{1/2 + i\gamma}}{1/2 + i\gamma}\Bigr)}
+
O\Bigl(\frac{x \log^2 (qxZ)}{Z} \Bigr),
\end{equation}
which involves only zeros having non-negative imaginary parts
belonging to two different $L$-functions attached to non-principal characters.
Identity \eqref{conj-trick} will be useful in the proofs of Lemmas \ref{conj-trick-lemma}-\ref{lem21}.

We will apply the Cauchy--Schwarz inequality ($L^2$-norm form) multiple times 
in our proofs so we record it here: 
for any square-integrable complex valued functions $f$ and $g$, we have
\begin{equation}
\label{Cauchy}
\Bigl \vert  \int_a^b f(t) g(t) \, \dx t \Bigr \vert  
\leq 
\Bigl( \int_a^b  \vert f(t) \vert ^2  \, \dx t \Bigr)^{1/2} 
\Bigl( \int_a^b  \vert g(t) \vert ^2 \, \dx t \Bigr)^{1/2}.
\end{equation}

We also record the Sobolev--Gallagher inequality (which will play an important role in proving Lemma \ref{lem3}).
\begin{lem} \cite[Lemma~1.1]{MontgomeryTopics}
\label{Sob-Gal-ineq}
Let $a<b$ be real numbers and $f$ a continuous complex valued function on
$[a,b]$, with continuous first derivative $f'$ in $(a,b)$. Then
\begin{equation*}
\vert f(u) \vert  \leq (b-a)^{-1} \int_a^b  \vert f(t) \vert  \, \dx t + \int_a^b  \vert f'(t) \vert  \, \dx t
\end{equation*}
for any $u$ in $[a, b]$.
\end{lem}

Finally, we state
the following crucial lemma, an analogue of Lemma 6 of Goldston--Montgomery
\cite{GM1984}. In fact, the statement of Lemma \ref{gmslem} is different from the one in \cite[Lemma~6]{GM1984} 
since  we are integrating over $[-T,T]$, rather
than over $[0,T]$. This causes 
the presence of  the extra factor $2$ in the main term (remark
that the coefficients $c(\mu)$ are real numbers and hence 
$ \vert \mathcal{S}(t) \vert^2 =  \vert \mathcal{S}(-t) \vert ^2$ for $t\in [0,T]$).
\begin{lem}
\label{gmslem}
Let $\mathcal{S}(t) \coloneq \sum_{\mu \in \mathcal{M}} c(\mu) e^{2\pi i\mu t}$
be a Fourier series with $\mathcal{M}$ be a countable set of real numbers and with 
$c(\mu)$ real Fourier coefficients. 
If $\sum_{\mu \in \mathcal{M}}  \vert c(\mu) \vert  < \infty$,
then uniformly for $T\ge 1$ and $1/2T\leq \delta \leq 1/2$, we have
\[
\int_{-T}^{T}  \vert \mathcal{S}(t) \vert ^2 \, \dx t = 
2T \sum_{\mu \in \mathcal{M}}  \vert c(\mu) \vert ^2 + 
O\Big( \delta^{-1} \sum_{\mu \in \mathcal{M}}  \vert c(\mu) \vert ^2 
+ 
T\!\! \sum_{\substack{\mu, \nu \in \mathcal{M} \\
0 <  \vert \mu-\nu \vert < \delta}}  \vert c(\mu) c(\nu) \vert \Big).
\]
\end{lem}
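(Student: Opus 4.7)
The plan is to use Beurling--Selberg majorant/minorant functions in order to reduce the statement to Fourier-analytic bookkeeping on band-limited functions. For $\delta\in[1/(2T),1/2]$, let $B_\delta^\pm:\mathbb{R}\to\mathbb{R}$ denote the classical Beurling--Selberg majorant and minorant of the indicator $\mathbf{1}_{[-T,T]}$, i.e.\ entire functions of exponential type $2\pi\delta$ satisfying $B_\delta^-(t)\le \mathbf{1}_{[-T,T]}(t)\le B_\delta^+(t)$ and $\int B_\delta^\pm=2T\pm\delta^{-1}$. By the Paley--Wiener theorem the Fourier transforms $\widehat{B_\delta^\pm}$ are continuous and supported in $[-\delta,\delta]$, with $\widehat{B_\delta^\pm}(0)=2T\pm\delta^{-1}$, while the standard construction also gives $\|B_\delta^\pm\|_1\le 2T+O(\delta^{-1})\ll T$, where the last step uses $\delta\ge 1/(2T)$. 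In particular $\|\widehat{B_\delta^\pm}\|_\infty\le\|B_\delta^\pm\|_1\ll T$.

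First I would sandwich the target integral:
\[
\int_{\mathbb{R}}|\mathcal{S}(t)|^2 B_\delta^-(t)\,dt\;\le\;\int_{-T}^{T}|\mathcal{S}(t)|^2\,dt\;\le\;\int_{\mathbb{R}}|\mathcal{S}(t)|^2 B_\delta^+(t)\,dt.
\]
Since the $c(\mu)$ are real I expand $|\mathcal{S}(t)|^2=\sum_{\mu,\nu\in\mathcal{M}}c(\mu)c(\nu)e^{2\pi i(\mu-\nu)t}$; the double sum converges absolutely because $\bigl(\sum_\mu|c(\mu)|\bigr)^2<\infty$, which justifies exchanging summation and integration to obtain
\[
\int_{\mathbb{R}}|\mathcal{S}(t)|^2 B_\delta^\pm(t)\,dt=\sum_{\mu,\nu\in\mathcal{M}}c(\mu)c(\nu)\,\widehat{B_\delta^\pm}(\nu-\mu).
\]
Next I would split into diagonal and off-diagonal parts. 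The diagonal $\mu=\nu$ contributes $\widehat{B_\delta^\pm}(0)\sum_\mu|c(\mu)|^2=(2T+O(\delta^{-1}))\sum_\mu|c(\mu)|^2$, producing the main term. The off-diagonal piece is automatically restricted to $0<|\mu-\nu|<\delta$ by the Fourier support of $\widehat{B_\delta^\pm}$, and the uniform bound $\|\widehat{B_\delta^\pm}\|_\infty\ll T$ yields the error $O\!\left(T\sum_{0<|\mu-\nu|<\delta}|c(\mu)c(\nu)|\right)$. Combining the lower and upper sandwich bounds produces the claimed identity.

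The main obstacle is really the availability of the Beurling--Selberg pair with the precise integral $2T\pm\delta^{-1}$, Fourier support in $[-\delta,\delta]$, and a usable $L^1$-bound on the (possibly sign-changing) minorant $B_\delta^-$; this is classical (Vaaler's survey on Beurling's extremal functions, or Graham--Vaaler) and is the only genuinely delicate input. Everything else is standard: the interchange of summation and integration under the $\ell^1$ hypothesis on the $c(\mu)$, the Paley--Wiener support statement, and the elementary $\|\widehat{B}\|_\infty\le\|B\|_1$ combined with $\delta\ge 1/(2T)$ to turn $\delta^{-1}$ into $O(T)$ in the off-diagonal error term.
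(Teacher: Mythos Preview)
The paper does not supply its own proof of this lemma; it is quoted from \cite{GM1984, LanguascoPZ2017a} and used as a black box. Your Beurling--Selberg argument is correct and is one of the standard routes to such mean-value estimates: the majorant $B_\delta^+$ is automatically non-negative (it majorises $\mathbf{1}_{[-T,T]}\ge 0$), giving $\|B_\delta^+\|_1=2T+\delta^{-1}$, and then $|B_\delta^-|\le B_\delta^+ + (B_\delta^+-B_\delta^-)$ yields $\|B_\delta^-\|_1\le 2T+3\delta^{-1}\ll T$, which is exactly the $L^1$-control you flagged as the delicate point. With that in hand, the sandwich, the Fourier expansion under the $\ell^1$ hypothesis, the Paley--Wiener support restriction, and the diagonal/off-diagonal split all go through as you describe.

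For context, the proof in \cite{GM1984} (their Lemma~7) proceeds somewhat differently: rather than the full Beurling--Selberg pair, Goldston and Montgomery work with an explicit Fej\'er-type kernel to obtain the band-limited majorant and then handle the lower bound by a separate argument. The Beurling--Selberg route you chose is cleaner and more symmetric between upper and lower bounds, at the cost of importing the extremal-function machinery; the outcome and the uniformity in $T,\delta$ are the same.
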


\section{Proofs of Theorems \ref{yalmodifiedcor} and \ref{yalmodifiedthm1}}
\label{S3}

Under GRH, following the argument in Landau \cite[p.~353]{LandauHand} we obtain for $q\ge 1$,
$(a,q)=1$, $x > 1$, and for primitive characters $\chi$ (when $q>1$) that
\[
\sideset{}{'} \sum_{n\leq  x} \frac{\Lambda(n)\chi(n)}{n^s}
=
\delta_q \frac{x^{1-s}}{1-s}
+
\sum_{\ell=0}^{\infty}\frac{x^{-2\ell-s-\mathfrak{a}}}{2\ell+s+\mathfrak{a}}
-
\sum_{\rho}\frac{x^{\rho-s}}{\rho-s}
-
\frac{L^\prime}{L}(s,\chi),
\]
where the dash over the summation indicates that only half of
the term with 
$n = x$ is to be included in the sum. As is customary,
the sum over the non-trivial zeroes has to be interpreted in 
the symmetrical 
sense as $ \lim_{Z \to \infty} \sum_{ \vert \gamma \vert  < Z}$,
$s\in \mathbb C$, $s\ne 1$, $s\ne \rho$, $s\ne -(2\ell+\mathfrak{a})$,
$\mathfrak{a} =0$ if $\chi(-1)=1$,
$\mathfrak{a} =1$ if $\chi(-1)=-1$,
and $\delta_q=1$ if $q=1$ and $\delta_q=0$ otherwise\footnote{The term 
with $\delta_q$  gives the contribution of the pole at $1$ of the Riemann zeta function,
see the proof of eq.~(18) in \cite{Montgomery1973}, and it should be inserted into
\cite[eq.~(6)]{yal} too.}. 
Letting $s=\sigma+it$ and $\rho=1/2+i\gamma$, 
and following the proof of Montgomery \cite[pp.~185--186]{Montgomery1973},
the previous formula can be rewritten as
\begin{equation}
\label{switchLR}
\sum_{\rho}
\frac{x^{1/2-\sigma}x^{i(\gamma-t)}}{\sigma-1/2 +i(t-\gamma)}
=
 \frac{L^\prime}{L}(s,\chi)
 +
\sideset{}{'} \sum_{n\leq  x} \frac{\Lambda(n)\chi(n)}{n^s}
-
\delta_q \frac{x^{1-s}}{1-s}
-
\sum_{\ell=0}^{\infty}\frac{x^{-2\ell-s-\mathfrak{a}}}{2\ell+s+\mathfrak{a}}
.
\end{equation}
On replacing $s$ by $1-\sigma+it$, we obtain
\begin{align}
\notag
\sum_{\rho}\frac{x^{\sigma-1/2}x^{i(\gamma-t)}}{1/2-\sigma +i(t-\gamma)}
&=
\frac{L^\prime}{L}(1-\sigma+it,\chi)
 +
\sideset{}{'} \sum_{n\leq  x} \frac{\Lambda(n)\chi(n)}{n^{1-\sigma+it}}
\\&
\label{reflection}
\hskip1cm
-
\delta_q \frac{x^{\sigma-it}}{\sigma-it}
-
\sum_{\ell=0}^{\infty}\frac{x^{-2\ell-1 + \sigma -it-\mathfrak{a}}}{2\ell+1 - \sigma +it+\mathfrak{a}}
.
\end{align}
Subtracting the respective sides of \eqref{reflection} from \eqref{switchLR} 
and using the relation 
\begin{equation}
\label{derlog-L}
 \frac{L^\prime}{L}(s,\chi)
 =
 - \sum_{n\ge 1} \frac{\Lambda(n)\chi(n)}{n^s}
 \quad (\Re(s)>1),
\end{equation}
 we obtain 
\begin{align}
\notag
\sum_{\gamma}\!
\frac{(2\sigma-1) x^{i\gamma}}{(\sigma -1/2)^{2}+(t-\gamma)^{2}}
&=
-\frac{1}{\sqrt{x}} 
\Bigl(
\sideset{}{'} \sum_{n\leq x} \Lambda(n)\chi(n) \Bigl( \frac{x}{n} \Bigr)^{1-\sigma+it} 
\!+\!
\sum_{n > x} \Lambda(n)\chi(n) \Bigl( \frac{x}{n} \Bigr)^{\sigma+it} 
\Bigr)
\\
\notag
&
-
\frac{L^\prime}{L}(1-\sigma+it, \chi)
x^{1/2-\sigma+it} 
+
\delta_q 
\frac{(2\sigma-1)x^{1/2}}{(\sigma-1+it)(\sigma-it)}
\\
\label{Montgomery-eq-(22)}
&
-\frac{1}{\sqrt{x}} 
\sum_{\ell=0}^{\infty}
\frac{(2\sigma-1) x^{-2\ell-\mathfrak{a}}}{(\sigma-1-it-2\ell-\mathfrak{a} ) (\sigma+it+2\ell+\mathfrak{a})}.
\end{align}
For primitive $\chi \pmod{q}$,  
we obtain by logarithmic differentiation of the functional equation of $L(s,\chi)$ that
\begin{equation*}
- \frac{L^\prime}{L}(1-\sigma+it, \chi)
=
\log \Bigl(\frac{q\tau}{2\pi} \Bigr)
+ \frac{L^\prime}{L}(\sigma-it, \overline{\chi})
+O_\sigma (1),
\end{equation*}
with \( \tau =  \vert t \vert +2\), 
which,  using \eqref{derlog-L}, for $\sigma>1$ becomes
 \begin{equation}
\label{derlog-reflection}
- \frac{L^\prime}{L}(1-\sigma+it, \chi)
=
\log (q\tau)
+O_\sigma (1).
\end{equation}
Letting $\sigma=3/2$, inserting \eqref{derlog-reflection} into
\eqref{Montgomery-eq-(22)}, multiplying both sides of \eqref{Montgomery-eq-(22)} by $\overline{\chi}(a)$
and summing over the Dirichlet characters, we can write 
\begin{align}\label{eqn8}
\Bigl \vert \sum_{\chi \pmod{q}} \overline{\chi}(a) 
\sum_{\substack
{\gamma
}} \frac{2x^{i\gamma}}{1+(t-\gamma)^2} \Bigr \vert ^2
= \Bigl \vert \sum_{1\leq  j \leq  4} R_j(x,t)\Bigr \vert ^2,
\end{align}
where 
\begin{align*}
R_1(x,t) & \coloneq 
- \frac{\varphi(q)}{\sqrt{x}} 
   \Bigl( 
   \sum_{\substack{n \leq x \\  n \equiv a \pmod{q}}} \Lambda(n) \Bigl(\frac{x}{n}\Bigr)^{-1/2+it}
   + 
   \sum_{\substack{n > x \\ n \equiv a \pmod{q}}}\Lambda(n) 
   \Bigl(\frac{x}{n}\Bigr)^{3/2+ it}
   \Bigr),\\ 
 R_2(x,t) &\coloneq x^{-1+it} \sum_{\chi \pmod{q}} \overline{\chi}(a)
  ( \log (q^*\tau) +O(1)), 
\\
R_3(x,t) & \coloneq O( x^{-1/2}\tau^{-1} 
\varphi(q)  + \delta_q x^{1/2}\tau^{-2}), 
\\
R_4(x,t) & \coloneq \frac{1}{\sqrt{x}} \sum_{\chi \pmod{q}} \overline{\chi}(a) \Bigl(\!\!\!
\sum_{\substack{n \leq x \\ \chi(n) \ne \chi^*(n)}
} \!\!\! \Lambda(n) \chi^*(n) \Bigl(\frac{x}{n}\Bigr)^{-1/2 +it}
\\&
\hskip6cm+\!\!\!\!\!
\sum_{\substack{n > x \\ \chi(n) \ne \chi^*(n)}
} \!\!\!\Lambda(n) \chi^*(n) \Bigl(\frac{x}{n}\Bigr)^{3/2+ it}
\Bigr).
\end{align*} 

Note that we have used the orthogonality of Dirichlet characters to obtain the expression of $R_1$.
The precise definition of $R_2$ and $R_3$ is not relevant for our purposes, the estimates above will suffice. 
In the definition of $R_4(x,T)$, \(\chi \pmod{q}\) is a character induced by the primitive character \(\chi^*\pmod{ q^*}\). 
The last term in the parentheses is a correction term for non 
primitive \(\chi\).\footnote{\label{foot_typo}Remark that in the 
analogous formula in \cite[eq.~(8)]{yal} in the
contribution for primitive characters for $x^{1/2}$ one should read $x^{-1/2}$.}
Moreover, since $\chi(n) \ne \chi^*(n)$ implies that $(n,q) > 1$,
for such $n$ we have $\sum_{\chi \pmod{q}} \overline{\chi}(a) \chi^*(n) = 0.$ 
As a consequence, we obtain that
$R_4(x,t)=0$.

\begin{proof}[Proof of Theorem \ref{yalmodifiedcor}]

We integrate both sides of equation \eqref{eqn8} from $t=-T$ to $t=T$,
where $T$ will be specified later.
The left hand side of equation \eqref{eqn8} can be written as 
\begin{equation*}
    \sum_{\chi_1, \chi_2} \overline{\chi_1}(a) \chi_2(a) 
    \sum_{\substack{\gamma_j, \, j=1,2 
    }}
   \frac{4 x^{i(\gamma_1 - \gamma_2)}}{(1+(t -\gamma_1)^2) (1+(t-\gamma_2)^2)},
\end{equation*}
where in the double sum over the characters we dropped, for brevity, the $\!\!\pmod{q}$ specification.
Now we integrate 
this function from $t=-T$ to $t=T.$ We claim that
\begin{align}
\notag
    &\int_{-T}^T \sum_{\chi_1, \chi_2} \overline{\chi_1}(a) \chi_2(a) 
    \sum_{\substack{\gamma_j, \, j=1,2
    }}
   \frac{4 x^{i(\gamma_1 - \gamma_2)}}{(1+(t -\gamma_1)^2) (1+(t-\gamma_2)^2)} \, \dx t 
   \\
\notag
    &
    =\!\!\!
    \int_{ -\infty}^{\infty} \sum_{\chi_1, \chi_2} \overline{\chi_1}(a) \chi_2(a)
    \!\!\!
    \sum_{\substack{ \vert \gamma_j \vert  \leq T \\ j=1,2
     }}
   \frac{4 x^{i(\gamma_1 - \gamma_2)}}{(1+(t -\gamma_1)^2) (1+(t-\gamma_2)^2)} \, \dx t 
   + O(\varphi(q)^2 \log T\log^2 (qT)
   \\
     \label{sona}
     &=   2\pi F_q(x,T) + O(\varphi(q)^2 \log T\log^2 (qT).
   \end{align}
To prove equation \eqref{sona}, we first recall that for $\chi$ mod $q$, there are $\ll \log (qT)$ 
zeros such that $L(1/2 + i\gamma, \chi) = 0$ and $T \leq \gamma \leq T+1$, $T \geq 2$. 
This implies for $ \vert t\vert \leq T$ that
\begin{equation}
\label{eqn33}
    \sum_{\substack{ \vert \gamma \vert  >T 
    }}
   \frac{1}{1+(t - \gamma)^2 }
   \ll 
    \frac{\log (qT)}{T-t+1}.
    \end{equation}

We now recall \cite[Ch.~16, Lemma]{Davenport}, i.e., 
\begin{equation}
\label{eqn34}
    \sum_{\substack{\gamma 
    }}
   \frac{1}{1+(t - \gamma)^2 }
   \ll \log (q \tau).
    \end{equation}
Using \eqref{eqn33}-\eqref{eqn34} we obtain  that
\begin{equation}
    \int_{ -T}^T\! \sum_{\chi_1, \chi_2} \overline{\chi_1}(a) \chi_2(a)
    \!\!\!
    \sum_{\substack{\gamma_1, \gamma_2 \\  \vert \gamma_2 \vert  >T 
    }}
    \!\!\!
   \frac{4 x^{i(\gamma_1 - \gamma_2)}}{(1+(t -\gamma_1)^2) (1+(t-\gamma_2)^2)} \, \dx t
   \label{eqn36}
   \ll 
   \varphi(q)^2 (\log T)\log^2 (qT).
\end{equation}

For $\vert t \vert > T$, we have 
\begin{align*}
\sum_{\vert\gamma \vert \leq T}
\frac{1}{1+(t-\gamma)^{2}}
&\ll
\frac{\log (q\vert t \vert)}{\vert t \vert - T + 1},
\end{align*}
so that
\begin{align}
    \int_{ \vert t \vert > T}  \sum_{\substack{ \vert \gamma_j \vert  \leq T \\ j=1,2 
    }}
   \frac{\, \dx t}{(1+(t -\gamma_1)^2) (1+(t-\gamma_2)^2)}  \nonumber 
   & \ll 
   \int_{ \vert t \vert > T}  
   \frac{\log^2 (q\vert t \vert) }{(\vert t \vert - T + 1 )^2} \, \dx t
  \ll \log^2 (qT). 
\end{align}
This implies  that
\begin{equation}
    \int_{ \vert t \vert > T}\! \sum_{\chi_1, \chi_2} \overline{\chi_1}(a) \chi_2(a)  \!\!\!
    \sum_{\substack{ \vert \gamma_j \vert  \leq T \\ j=1,2 
    }}
   \frac{4 x^{i(\gamma_1 - \gamma_2)}}{(1+(t -\gamma_1)^2) (1+(t-\gamma_2)^2)} \, \dx t 
   \label{eqn35}
   \ll \varphi(q)^2 \log^2 (qT).
\end{equation}
Now \eqref{sona} follows on 
combining equations \eqref{eqn36}-\eqref{eqn35}
and applying Cauchy's 
residue theorem to evaluate the second integral in \eqref{sona}.
As $R_4(x,t)=0$, from \eqref{eqn8}-\eqref{sona} we obtain
   \begin{align}\label{intlhsrhs}
       \int_{-T}^T \big \vert \sum_{1\leq  j \leq  3} & R_j(x,t)\big \vert ^2 \, \dx t 
  =  2\pi F_q(x,T) + O(\varphi(q)^2 \log T\log^2 (qT).
   \end{align}
Using the Cauchy--Schwarz inequality \eqref{Cauchy}, we can easily deduce that
\begin{align}\label{eqn39}
  \int_{-T}^T \big \vert \sum_{1\leq  j \leq  3} & R_j(x,t)\big \vert ^2 \, \dx t 
  =   \sum_{1\leq  j\leq  3} \int_{-T}^T \big \vert  R_j(x,t)\big \vert ^2 \, \dx t   \nonumber \\
  & + O\Bigl(\sum_{1\leq  j \leq  3} \sum_{\substack{1\leq  k \leq  3\\ k\ne j}} 
  \bigl(\int_{-T}^T \big \vert  R_j(x,t)\big \vert ^2 \, \dx t\bigr)^{1/2} 
  \bigl(\int_{-T}^T \big \vert  R_k(x,t)\big \vert ^2 \, \dx t\bigr)^{1/2}
  \Bigr).
\end{align}

Integrating   $\vert R_j(x,t) \vert^2, j = 2,3$ (see \eqref{eqn8} for 
their definitions) from $t=-T$ to $t=T$, we 
obtain\footnote{Equation \eqref{eqn411} is the analogue of the first of the two integral 
estimates appearing in the middle of \cite[p.~330]{yal}, 
where for $x$ in both argument and bound one should 
read $x^2$. Taking this into account we arrive at a better upper bound for $T$ in 
Theorem \ref{yalmodifiedthm1}.}

\begin{align}\label{eqn411}
    \int_{-T}^T  \vert R_2(x,t) \vert ^2 \, \dx t \ll
    \frac{\varphi(q)^2 T \log^2 (qT)}{x^2},
\end{align}
and 
\begin{align}\label{eqn40}
   \int_{-T}^T  \vert R_3(x,t) \vert ^2 \, \dx t \ll \frac{\varphi(q)^2}{x} +x \ll x
\end{align}
for $1\leq  q \leq  x$.

The mean square of  $R_1(x,t)$ is evaluated in the following lemma.
\begin{lem}
\label{GM-lemma}
Uniformly for $q \leq x^{1-\eps}$  and $ x/\varphi(q) \leq T$ we have, as $x \to \infty$,
\begin{align}
\label{sx11}
  \frac{1}{\varphi(q)^2} \int_{-T}^T  \vert R_1(x,t) \vert ^2 \, \dx t  &=
  2T S(x) + O\Big(
   \frac{\sqrt{TxS(x)}}{\varphi(q)}\Big),
   \end{align}
where
\[
 S(x) \coloneq \frac{1}{x^2}
\sum_{\substack{n \leq x \\
n \equiv a \pmod{q}}} n \Lambda(n)^2 +
x^2 \sum_{\substack{n > x \\ \nonumber
n \equiv a \pmod{q}}}\frac{\Lambda(n)^2}{n^3}.
\]
\end{lem}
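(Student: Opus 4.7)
The plan is to rewrite $R_1(x,t)/\varphi(q)$ as a generalized Dirichlet series, apply the Goldston--Montgomery mean value theorem (Lemma~\ref{gmslem}), and exploit the congruence condition $n \equiv a \pmod{q}$ to control the resulting cross-term error.

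Starting from \eqref{eqn388} and factoring out a unimodular phase, one obtains
\[
\frac{R_1(x,t)}{\varphi(q)} = -e^{it\log x}\,\mathcal{S}(t), \qquad \mathcal{S}(t) = \sum_{n \equiv a \pmod{q}} c(n)\, n^{-it},
\]
where $c(n) = x^{-1}\Lambda(n)\sqrt{n}$ for $n \le x$ and $c(n) = x\,\Lambda(n)/n^{3/2}$ for $n > x$. By construction, $\sum_n |c(n)|^2 = S(x)$, so applying Lemma~\ref{gmslem} with frequencies $\mu_n = -\log n/(2\pi)$ will deliver the main term $2T\,S(x)$ together with two error contributions $O(\delta^{-1} S(x))$ and $O(T\,\Sigma_\delta)$, where
\[
\Sigma_\delta = \sum_{\substack{m, n \equiv a \pmod{q},\ m \ne n \\ |\log(m/n)| < 2\pi \delta}} |c(m)\,c(n)|.
\]

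The key step is bounding $\Sigma_\delta$. Here the congruence is decisive: any two distinct admissible integers differ by at least $q$, so $|\log(m/n)| \gg q/\max(m,n)$. Hence for fixed $m$, the number of admissible $n \ne m$ is $\ll \delta m/q$ when $\delta m \ge q$ and is zero otherwise. Combining this with $|c(m) c(n)| \le (|c(m)|^2 + |c(n)|^2)/2$ and the mean-value estimates
\[
\sum_{\substack{n \le x \\ n \equiv a \pmod{q}}} n^2 \Lambda(n)^2 \ll \frac{x^3 \log x}{\varphi(q)}, \qquad \sum_{\substack{n > x \\ n \equiv a \pmod{q}}} \frac{\Lambda(n)^2}{n^2} \ll \frac{\log x}{x\,\varphi(q)},
\]
which follow from Brun--Titchmarsh via Abel summation, one is led to $\Sigma_\delta \ll \delta x \log x/(q\, \varphi(q))$.

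The total error from Lemma~\ref{gmslem} is then $\delta^{-1} S(x) + T\Sigma_\delta$. Choosing $\delta$ of size $\sqrt{q/(Tx)}$ balances the two contributions, lies in the admissible range $1/(2T) \le \delta \le 1/2$ under the hypotheses $T \ge x/\varphi(q)$ and $q \le x^{1-\eps}$, and yields the claimed error $O(\sqrt{Tx\,S(x)}/\varphi(q))$. The hardest step is the cross-term estimate: the saving $1/q$ arising from the arithmetic-progression structure is precisely what is needed to match the target error, and keeping track of the scales $n \le x$ versus $n > x$ (whose contributions to both $S(x)$ and $\Sigma_\delta$ are of comparable size) is the main technical care required.
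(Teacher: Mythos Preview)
Your overall strategy---apply Lemma~\ref{gmslem} to the Dirichlet polynomial form of $R_1/\varphi(q)$, bound the off-diagonal $\Sigma_\delta$ via the arithmetic-progression structure and Brun--Titchmarsh, then optimize in $\delta$---is exactly the route the paper takes. The gap is quantitative, and it occurs at the AM--GM step.

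By passing from $|c(m)c(n)|$ to $\tfrac12(|c(m)|^2+|c(n)|^2)$ you convert $\Lambda(m)\Lambda(n)$ into $\Lambda(m)^2$, and this is what forces the extra $\log x$ in your bound $\Sigma_\delta \ll \delta x\log x/(q\varphi(q))$. With that bound, balancing $\delta^{-1}S(x)$ against $T\Sigma_\delta$ gives an error of size $\sqrt{TxS(x)\cdot(\varphi(q)\log x/q)}/\varphi(q)$, which exceeds the stated $\sqrt{TxS(x)}/\varphi(q)$ by a factor $\sqrt{\varphi(q)\log x/q}\asymp\sqrt{\log x}$ for typical $q$. No choice of $\delta$ recovers the lost factor: requiring both error terms to be $\ll\sqrt{TxS(x)}/\varphi(q)$ forces $\varphi(q)\le Cq/\log x$, which fails since $q/\varphi(q)\ll\log\log q$. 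So your argument as written proves a slightly weaker lemma, not \eqref{sx11}.

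The fix is to drop AM--GM and keep $\Lambda(m)\Lambda(n)$ intact. For fixed $m$ the inner sum $\sum_{n}\Lambda(n)$ over $n\equiv a\pmod q$, $|n-m|\ll\delta m$, is handled directly by Brun--Titchmarsh as $\ll \delta m/\varphi(q)$ (the ratio $\log m/\log(\delta m/q)$ stays bounded in the relevant range), and then the outer sum $\sum_m m\Lambda(m)$ is again $\ll x^2/\varphi(q)$ by Brun--Titchmarsh plus partial summation. This yields $\Sigma_\delta\ll \delta x/\varphi(q)^2$ with no logarithm. The paper then takes $\delta=\tfrac{\varphi(q)}{2}\sqrt{S(x)/(Tx)}$ (note: expressed through $S(x)$, not $q$), which makes both $\delta^{-1}S(x)$ and $T\delta x/\varphi(q)^2$ equal to $\sqrt{TxS(x)}/\varphi(q)$ on the nose, and one checks $1/(2T)\le\delta\le 1/2$ from $T\ge x/\varphi(q)$, $q\le x^{1-\eps}$ and the Brun--Titchmarsh bound $S(x)\ll\log x/\varphi(q)$.
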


\begin{proof}
Recalling the definition of $R_1(x,t)$ given in \eqref{eqn8}, we can write
\begin{equation}
\label{eqn388ll}
  - \frac{R_1(x,t)}{\varphi(q)} = \sum_{\substack{n \geq 1 \\  n \equiv a \pmod{q}}} c(n) \Bigl(\frac{x}{n}\Bigr)^{it}, 
   \end{equation}
where 
$c(n) \coloneq	\Lambda(n)n^{1/2}x^{-1}$, if  $n\leq x$
and $c(n) \coloneq \Lambda(n) n^{-3/2} x$, if $ n> x$.
Using \eqref{pnt}-\eqref{equi} it is clear that $\sum_{n} c(n)^2 < \infty.$
Applying Lemma \ref{gmslem} to the series in \eqref{eqn388ll}, 
we obtain that
\begin{align}\label{r1ts}
    \frac{1}{\varphi(q)^2}  \int_{-T}^T  \vert R_1(x,t) \vert ^2 \, \dx t = 2T S(x) +
  O(\delta^{-1} S(x) + E_1 + E_2),
\end{align}
where
\[
E_1 \coloneq T\!\!\! \sum_{ \substack{n < x\\
n \equiv a \pmod{q}}}\!\!\!\! \Big( \!\!\!\!\!\!\sum_{\substack{ m \leq x \\ m \equiv a \pmod{q}
\\ 0 <  \vert \log(n/m) \vert <2\pi \delta}}\!\!\!\!\! \Lambda(n) \Lambda(m) \frac{(nm)^{1/2}}{x^2}+
\!\!\!
\sum_{\substack{ m > x \\
m \equiv a \pmod{q} \\ 0 <  \vert \log(n/m) \vert <2\pi \delta}}\!\!\!\!\! \Lambda(n) \Lambda(m) \frac{n^{1/2}}{m^{3/2}}
\Big),
\]
and
\[
E_2 \coloneq T
\sum_{n \geq x}
\Big(\!\!\!\! \sum_{\substack{ m \leq x \\
m \equiv a \pmod{q} \\ 0 <  \vert \log(n/m) \vert <2\pi \delta}} 
\!\!\!\!\!\!\!\!\!\!\Lambda(n) \Lambda(m) \frac{m^{1/2}}{n^{3/2}} 
+
\!\!\!\!\!\!\!\!\!\!
\sum_{\substack{m > x \\ 
m \equiv a \pmod{q} \\
0 <  \vert \log(n/m) \vert <2\pi \delta}} 
\!\!\!\!\!\!\!\!\!\Lambda(n) \Lambda(m) \frac{x^2}{(nm)^{3/2}}
\Big).
\]

Using the Brun--Titchmarsh Classical Theorem \ref{BT-thm} 
we obtain
\begin{equation}
\label{pntsx}
   S(x) \ll \frac{\log x}{\varphi(q)},
\end{equation}
as $x \to \infty,$
uniformly in $q \leq x^{1-\eps}$.
Choosing 
\begin{equation}
\label{delta-def}
\delta = \frac{\varphi(q)}{2} \sqrt{\frac{S(x)}{Tx}},
\end{equation}
for sufficiently large $x$ we have that $1/T \leq 2 \delta \leq 1,$
when $ x/\varphi(q) \leq T$ and $ q \leq x^{1-\eps}.$
Since $ e^{2\pi \delta} - 1 \leq  10^4 \delta$ for any $\delta \leq 1/2$, we have uniformly for $q \leq x^{1-\eps}$ that

\begin{align}\label{E1est}
\nonumber
   E_1 &\ll
   \frac{T}{x} \sum_{ \substack{n \leq  x\\
n \equiv a \pmod{q}}} \sum_{\substack{ m \geq 1 \\ m \equiv a \pmod{q}
\\ 0 <  \vert \log(n/m) \vert <2\pi \delta}}\!\!\!\!\! \Lambda(n) \Lambda(m) \\
    &\nonumber
    \ll 
    \frac{T}{x}
\sum_{\substack{n \leq 
x \\ n \equiv a \pmod{q}} } \Lambda(n) 
\sum_{ \substack{n < m \leq n+ 10^4 \delta x \\ m \equiv a \pmod{q}}}  \Lambda(m) 
   \\
   &\ll \frac{T}{x}
\sum_{\substack{n \leq
x \\ n \equiv a \pmod{q}} } \Lambda(n) (\psi(n + 10^4 \delta x;q,a) -  \psi(n;q,a) )
   \ll  \frac{T \delta x}{\varphi(q)^2},
\end{align}
for sufficiently large $x$, where we have used the Brun--Titchmarsh Classical Theorem \ref{BT-thm} in the final step.  
Moreover, we can write
\begin{align}\label{E2est}
   E_2 &= T
\sum_{ r = 0}^{\infty} \sum_{ \substack{n= 2^r x\\ n \equiv a \pmod{q}}}^{2^{r+1}x}
\Big(\sum_{\substack{ m \leq x \\
m \equiv a \pmod{q} \\ 0 <  \vert \log(n/m) \vert <2\pi \delta}} 
\!\!\!\!\!\!\!\!\!\!\Lambda(n) \Lambda(m) \frac{m^{1/2}}{n^{3/2}} 
+
\!\!\!\!\!\!
\sum_{\substack{m > x \\ 
m \equiv a \pmod{q} \\
0 <  \vert \log(n/m) \vert <2\pi \delta}} 
\!\!\!\!\!\!\!\!\!\Lambda(n) \Lambda(m) \frac{x^2}{(nm)^{3/2}}
\Big) \nonumber\\ 
& \ll 
   \frac{T}{x} \sum_{r=0}^{\infty} 
    \frac{1}{2^{3r}} \sum_{ \substack{n= 2^r x\\ n \equiv a \pmod{q}}}^{2^{r+1}x}
\sum_{\substack{ m \geq 1 \\ m \equiv a \pmod{q}
\\ 0 <  \vert \log(n/m) \vert <2\pi \delta}} \Lambda(n) \Lambda(m)
\nonumber\\ 
& \ll 
   \frac{T}{x} \sum_{r=0}^{\infty} 
    \frac{1}{2^{3r}} 
    \sum_{\substack{n \leq  2^{r+1} x\\ n \equiv a \pmod{q}}}
\sum_{\substack{ m \geq 1 \\ m \equiv a \pmod{q}
\\ 0 <  \vert \log(n/m) \vert <2\pi \delta}} \Lambda(n) \Lambda(m)
\nonumber\\ 
& \ll  \sum_{r=0}^{\infty}  \frac{1}{2^{3r}}\Bigl( \frac{T \delta 2^{2r}x}{\varphi(q)^2}\Bigr)
   \ll \frac{T \delta x}{\varphi(q)^2},
\end{align}
as $x \to \infty,$
uniformly in $q \leq x^{1-\eps}$, where the final single sum  is obtained on using \eqref{E1est}.
Combining  \eqref{r1ts} and \eqref{E1est}-\eqref{E2est}, and substituting the value of $\delta$ 
as in \eqref{delta-def}, we have completed the proof of Lemma \ref{GM-lemma}.
\end{proof}

{}From \eqref{sx11} and \eqref{pntsx} we deduce that
\begin{align}\label{eqn3899}
\int_{-T}^T  \vert R_1(x,t) \vert ^2 \, \dx t 
  \ll \varphi(q) T \log x + (\varphi(q) Tx\log x)^{1/2},
\end{align}
as $x \to \infty$,  uniformly in the range 
$q \leq x^{1-\eps}$. 

Combining \eqref{eqn39}-\eqref{eqn40} and \eqref{eqn3899} it follows that

\begin{align}\label{eqn42new}
  \int_{-T}^T \big \vert \sum_{1\leq  j \leq  3} R_j(x,t)\big \vert ^2 \, \dx t & \ll
  \varphi(q)T \log x + (\varphi(q) Tx\log x)^{1/2} +
x +
  \frac{\varphi(q)^2 T \log^2 (qT)}{x^2} \nonumber\\
  &\ll \varphi(q) T \log x + (\varphi(q)Tx\log x)^{1/2},
 \end{align}

\noindent
uniformly in the range 
$q \leq x^{1-\eps}$ and $x/\varphi(q)  \leq T \leq \exp(\sqrt{x})$ as $x \to \infty.$
Recalling \eqref{intlhsrhs} we also need that
   $ \varphi(q) \log T\log^2 (qT) = o(T \log x),$
   as $x \to \infty$, and this implies $T\ge x$.
Summarizing, \eqref{intlhsrhs} and \eqref{eqn42new} imply that
\begin{align}\nonumber
   F_q(x,T) \ll \varphi(q) T  \log x,
\end{align}
uniformly in the range 
$q \leq x^{1-\eps}$ and $x \leq T \leq \exp(\sqrt{x})$ 
as $x \to \infty.$
\end{proof}

\begin{Rem} 
By following the proof strategy for the case of $F_q(x,T)$ mentioned above, the reader can also derive the analogous result for  $F^{+}_q(x,T)$; in this case 
it suffices to integrate from $0$ to $T$ instead of from $-T$ to $T$ and 
to replace \eqref{eqn33} with the following: for $0\leq  t \leq T$ one has that
\begin{equation*}
    \sum_{\substack{
    \gamma, \gamma\not\in[0,T]
    }}
   \frac{1}{1+(t - \gamma)^2 }
   \ll 
   \Bigl(
     \frac{1}{t+1}
     +
     \frac{1}{T-t+1}
    \Bigr)
    \log (qT).
    \end{equation*} 
We omit the details, since they mirror
ones that can be found 
in \cite{Montgomery1973}.
\end{Rem}

\begin{proof}[Proof of Theorem \ref{yalmodifiedthm1}]

Using the Prime Number Theorem in arithmetic progressions under GRH (see \eqref{pnterror})
in $S(x)$ (defined in Lemma \ref{GM-lemma}), we obtain
\begin{align}
\label{sx}
   S(x) = \frac{\log x}{\varphi(q)} + O \Big(\frac{\log^3 x} {\sqrt{x}}  \Big),
\end{align}
as $x \to \infty$, uniformly in $q \leq \sqrt{x}/ \log^{2+\eps} x$.
From Lemma \ref{GM-lemma}, see \eqref{sx11}, and \eqref{sx} we have
\begin{align}
\label{eqn38}
\int_{-T}^T  \vert R_1(x,t) \vert ^2 \, \dx t  &=
  2\varphi(q) T \log x + 
  O \bigl(x^{-1/2}\varphi(q)^2 T \log^3 x   \bigr)
  \nonumber
  \\
  \notag
  &\hskip2cm+
  O\Bigl( 
  \bigl(
  \varphi(q)Tx(\log x) (1+ x^{-1/2}\varphi(q) \log^2 x)
  \bigr)^{1/2}
  \Bigr)
  \\ 
  &= 2\varphi(q) T (\log x) ( 1+ o(1)) +
    O\bigl((\varphi(q) Tx\log x)^{1/2}\bigr),
\end{align}
as $x \to \infty$, uniformly in $q \leq \sqrt{x}/\log^{2+\eps} x.$ 
{}From  \eqref{eqn39}-\eqref{eqn40} and \eqref{eqn38} we obtain
\begin{align}
\notag
  \int_{-T}^T \big \vert \sum_{1\leq  j \leq  3} R_j(x,t)\big \vert ^2 \, \dx t & =
 2 \varphi(q)T (\log x) ( 1+ o(1)) 
 \\& \notag\hskip1cm
 + O\Bigl((\varphi(q) Tx\log x)^{1/2} +
x +x^{-2}\varphi(q)^2 T \log^2 (qT)\Bigr)
\\
  \label{eqn42}
  &= 2\varphi(q) T (\log x) (1+o(1)),
 \end{align}
 
\noindent 
as $x \to \infty,$
uniformly in $q \leq \sqrt{x}/\log^{2+\eps} x$,
when $x/\varphi(q) \leq T \leq \exp(x^{3/4})$.
In this $q$ and $T$ range, we also have
   $ \varphi(q) \log T\log^2 (qT) = o(T \log x),$
   as $x \to \infty.$
Equations 
\eqref{intlhsrhs} and \eqref{eqn42} imply that
\begin{align}\nonumber
   F_q(x,T) \sim \frac{\varphi(q)}{\pi} T \log x,
\end{align}
as $x \to \infty,$
uniformly in $q \leq \sqrt{x}/ \log^{2+\eps} x$,
when $x/\varphi(q) \leq T \leq \exp(x^{3/4})$.
\end{proof}

\section{Proofs of Theorems \ref{thmfullrange} and \ref{thm-Lzero-onehalf}
}
\label{S4}
Let $(a,q)= 1$. 
It is a simple consequence of the orthogonality of Dirichlet characters that
\[
\psi(x;q,a) = \frac{1}{\varphi(q)} \sum_{ \chi \pmod{q}} \overline{\chi}(a) \psi(x,\chi).
\]
Lemmas \ref{davenlem2}-\ref{davenlem} and \eqref{sip} imply that under GRH, and for $Z\leq  x$ that
\begin{align}\label{eqn1}
   \psi(x;q,a) 
   &= \frac{\psi(x, \chi_0)}{\varphi(q)}
   + \frac{1}{\varphi(q)} \sum_{\substack{ \chi \ne \chi_0\pmod{q}}} 
   \overline{\chi}(a) \psi(x,\chi) 
   \nonumber
    \\
   &=  \frac{\psi(x) + O(\log q \, \log x)}{\varphi(q)}
   + 
   \frac{1}{\varphi(q)} \sum_{\substack{ \chi \ne \chi_0\pmod{q}}} 
   \overline{\chi}(a) \psi(x, \chi) 
   \nonumber
    \\
   & = \frac{1}{\varphi(q)} 
   \Bigl( x -
\sum_{\substack{  \chi 
\pmod{q}}} \overline{\chi}(a) \sum_{\substack{ \vert \gamma \vert  \leq  Z 
}} \frac{x^{1/2 + i\gamma}}{1/2 + i\gamma} \Bigr) 
+ 
O\Bigl(
\frac{x \log^2 (qxZ)}{Z}
\Bigr),
   \end{align}
as $x \to \infty.$ 
Note that if $Z = x$ and $q \leq x^{1-\eps}$, then
the latter error term becomes $O(\log^2 x)$.
 
In the following, in order to handle the high-lying zeros,  
we will need a lemma relating $\Sigma^+(x,T,0)$,
defined in \eqref{Sigma-plus-def},
to $F^+_q(x,T)$, defined in \eqref{Fq-plus-def}.
\begin{lem}\label{lem3}
For $x\ge 2$ and $T > U \ge 0$, we have 
\[
\vert
\Sigma^+(x,T,0) - \Sigma^+(x,U,0)
\vert
\ll 
\bigl(T \max_{U \leq  t \leq T} F^+_q(x,t) \bigr)^{1/2}.
\]
\end{lem}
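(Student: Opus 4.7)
The plan is to combine the integral representation \eqref{integral-form-of-F} of $F_q$ with a Plancherel/Cauchy-Schwarz argument. The starting point is the observation that, setting $g(v) := \Sigma(x,T,v) - \Sigma(x,U,v)$, we have $g(0) = \Sigma(x,T,0) - \Sigma(x,U,0)$, and by Minkowski's inequality in $L^2(\mathbb{R}, e^{-2|v|}dv)$ combined with \eqref{integral-form-of-F},
$$\int_{-\infty}^{+\infty} |g(v)|^2 e^{-2|v|}\,dv \le 2\bigl(F_q(x,T)+F_q(x,U)\bigr)\le 4\max_{U\le t\le T}F_q(x,t).$$
Thus the quantity on the right-hand side of Lemma \ref{lem3} is within a constant factor of $T\|g\|_{L^2(e^{-2|v|}dv)}^2$, and the problem reduces to comparing $|g(0)|^2$ to this weighted $L^2$-norm.

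The core of the proof is then a \emph{sampling-type inequality} for the exponential sum $g$, whose frequencies lie in $(-T,-U]\cup(U,T]\subset[-T,T]$, of the form
$$|g(0)|^2 \ll T\int_{-\infty}^{+\infty} |g(v)|^2 e^{-2|v|}\,dv,$$
which, combined with the above, yields the claimed bound $|g(0)|^2 \ll T\max_{U\le t\le T}F_q(x,t)$. I would try to establish this inequality by writing $g(0) = \int g(v)K(v) e^{-2|v|}\,dv + E$ for a suitably chosen kernel $K$ (modelled on a Beurling--Selberg or Fej\'er-type function adapted to the spectrum $[-T,T]$, renormalised to compensate for the $e^{-2|v|}$ weight so that $\int K(v) e^{iv\gamma} e^{-2|v|}dv = 1$ for every frequency $\gamma$ in the spectrum of $g$, up to a small boundary error $E$) and then applying Cauchy-Schwarz in $L^2(e^{-2|v|}dv)$. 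A scaling computation indicates that $\int|K(v)|^2 e^{-2|v|}dv\ll T$, the kernel being essentially concentrated on a window of width $\sim 1/T$ at its peak of size $\sim T$.

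The main obstacle will be the careful construction of $K$ so that it both reproduces $g(0)$ up to a negligible error and has controlled weighted $L^2$ norm: the exponential weight $e^{-2|v|}$ is in tension with the Paley-Wiener bandlimitation of $g$, since any kernel that is bandlimited to $[-T,T]$ grows at most at rate $e^{T|v|}$, which overwhelms $e^{-2|v|}$ unless carefully tuned. A plausible resolution is a dyadic decomposition of the spectrum $(-T,-U]\cup(U,T]$ into unit-length intervals, applying local reproducing identities on each piece with the mean-value theorem of Lemma \ref{gmslem} handling the mean square of the local contributions, and summing telescopically to pass from local to global. The boundary zeros near $|\gamma|\in\{U,T\}$ can then be controlled via the zero-density bound \eqref{eqn34}, showing they contribute only $\ll\varphi(q)\log(qT)$ terms, dominated by $\sqrt{T\max F_q}$.
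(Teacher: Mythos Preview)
Your first two moves are correct and match the paper: you define $g(v)=\Sigma(x,T,v)-\Sigma(x,U,v)$ and bound $\int|g(v)|^2e^{-2|v|}\,dv$ by a constant times $\max_{U\le t\le T}F_q(x,t)$ (the paper expands $|g|^2$ directly rather than going through Minkowski, but either route suffices). The gap is in your passage from this weighted $L^2$ bound to the pointwise bound on $|g(0)|$. Your kernel approach runs into exactly the obstruction you name, and there is no evident way around it: requiring $\int K(v)e^{iv\gamma}e^{-2|v|}\,dv=1$ for all $|\gamma|\le T$ forces $K(v)e^{-2|v|}$ to be an inverse Fourier transform of a function identically $1$ on $[-T,T]$, hence not compactly supported, so $K$ itself must grow like $e^{2|v|}$ and $\int|K|^2e^{-2|v|}\,dv$ diverges. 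Your fallback via Lemma~\ref{gmslem} is also misdirected: that lemma controls \emph{mean squares} $\int_{-T}^{T}|\mathcal{S}(t)|^2\,dt$ of exponential sums, not pointwise values, and no dyadic telescoping of such mean squares recovers $|g(0)|$.

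The paper's route is much shorter and entirely elementary. It invokes the Sobolev--Gallagher inequality (quoted from \cite[Lemma~2]{LP96}): from the identity $|g(0)|^2=-\int_0^\infty\frac{d}{dv}\bigl(|g(v)|^2e^{-2v}\bigr)\,dv$ one obtains $|g(0)|^2\ll\|g\|^2+\|g\|\,\|g'\|$ in the weighted norm. The additional ingredient is a bound $\|g'\|^2\ll T^2\max_{U\le t\le T}F_q(x,t)$, obtained not by a Bernstein-type or spectral argument but by \emph{Abel summation in the height parameter}: writing $g'(v)/i=\sum_{U<|\gamma|\le T}\gamma\,c_\gamma e^{iv\gamma}$ and summing by parts against the partial sums $\Sigma(x,t,v)-\Sigma(x,U,v)$ for $U\le t\le T$ produces a boundary term of size $T$ and an integral over $t\in[U,T]$, each piece having weighted $L^2$ norm controlled (via \eqref{integral-form-of-F} and Minkowski) by $\max_{U\le t\le T}F_q(x,t)$. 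This is precisely where the maximum over $[U,T]$ in the statement originates. Combining the two estimates yields $|g(0)|^2\ll(1+T)\max F_q\ll T\max F_q$, with no kernel construction required.
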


\begin{proof}
Using \eqref{Sigma-plus-def}, we define
\begin{equation}
\label{defgv}
h(x, T, U, v) = 
\Sigma^+(x,T,v) -
\Sigma^+(x,U,v) 
=
\sum_{\chi \pmod{q}}\overline{\chi}(a) \sum_{\substack{U <  \gamma \leq  T
}} x^{i\gamma}e^{i\gamma v}.
\end{equation}
Recalling \eqref{Fq-plus-def}, we note that

\begin{align}
\nonumber
   \int_{-\infty}^{\infty}
   & \vert h(x, T, U, v) \vert ^2e^{-2 \vert v \vert } \, \dx v 
  \\
   \nonumber
   &
   =
   \sum_{\substack{\chi_1, \chi_2 \pmod{q}}} \overline{\chi}_1(a)\chi_2(a) 
   \sum_{\substack{U <  \gamma_j \leq  T\\ j = 1,2}} 
   x^{i(\gamma_1-\gamma_2)}
   \int_{-\infty}^{\infty}
   e^{i(\gamma_1-\gamma_2)v}e^{-2 \vert v \vert } \, \dx v 
  \\
   \nonumber
   & =  \sum_{\substack{\chi_1, \chi_2 \pmod{q}}} \overline{\chi}_1(a)\chi_2(a) 
    \sum_{\substack{U <  \gamma_j \leq  T\\
   j = 1,2
   }} 
   x^{i(\gamma_1-\gamma_2)}
   W(\gamma_1 - \gamma_2)
   \\
   & = 
   \sum_{\substack{\chi_1, \chi_2 \pmod{q}}} \overline{\chi}_1(a)\chi_2(a)
   ~ \bigl( G^+_{\chi_1, \chi_2}(x,T) - G^+_{\chi_1, \chi_2}(x,U) \bigr)
   \nonumber\\
       \label{G-F-relation}
   & = F^+_q(x,T) - F^+_q(x,U).
\end{align}
Letting $H(v) =  \vert  h(x, T, U, v) \vert ^2$,
we first remark that, using \eqref{G-F-relation}, we have
\begin{equation}
\label{G-estim}
\int_{-1}^1 
\vert  H(v)  \vert 
\, \dx v
\ll
\int_{-\infty}^{\infty}
\vert  H(v)  \vert 
\,e^{-2 \vert v \vert }\, \dx v
\ll
F^+_q(x,T) + F^+_q(x,U).
\end{equation}
Combining the Sobolev--Gallagher inequality (Lemma \ref{Sob-Gal-ineq})
and
\eqref{G-estim} yields
\begin{align}
\notag
\vert h(x, T, U, 0) \vert ^2
= 
H(0) 
&
\ll \int_{-1}^1  \vert  H(v)  \vert  \, \dx v +  \int_{-1}^1  \vert  H^{\prime}(v)  \vert  \, \dx v  
\\
&
\label{Sob-Gall-ineq}    
\ll F^+_q(x,T) +  F^+_q(x,U) + \int_{-1}^1   \vert  H^{\prime}(v)  \vert  \, \dx v.
\end{align}
By the Cauchy--Schwarz inequality \eqref{Cauchy}, 
we obtain 
\begin{equation}
\!\!
\label{G-prime-first}
\int_{-1}^1   \vert  H^{\prime}(v)  \vert  \, \dx v  \ll  
\Bigl(\int_{-1}^1 
\vert  H(v)  \vert 
\, \dx v
\Bigr)^{1/2} 
\Bigl(\int_{-1}^1 \bigl \vert \sum_{\chi\pmod{q}}\!\!
\overline{\chi}(a)
\sum_{U <  \gamma \leq  T} \!\!
\gamma x^{i\gamma}e^{iv\gamma}\bigr \vert ^2\, \dx v
\Bigr)^{1/2}.
\end{equation}
Moreover, by partial summation we have
\[
\sum_{\chi\pmod{q}}
\overline{\chi}(a)
\sum_{U <  \gamma \leq  T}\gamma x^{i\gamma}e^{iv\gamma}
=
T h(x, T, U, v)
- 
\int_{U}^T h(x, t, U, v)\, \dx t
\]
and, again using \eqref{G-F-relation}, we obtain
\begin{align}
\notag
\int_{-1}^{1}
\bigl \vert \sum_{\chi\pmod{q}}
&
\overline{\chi}(a) 
\sum_{U <  \gamma \leq  T}\gamma x^{i\gamma}e^{iv\gamma}\bigr \vert ^2 \, \dx v
\\
\notag
&\ll 
\int_{-\infty}^{\infty}
\bigl \vert \sum_{\chi\pmod{q}}
\overline{\chi}(a) 
\sum_{U <  \gamma \leq  T}\gamma x^{i\gamma}e^{iv\gamma}\bigr \vert ^2\,e^{-2 \vert v \vert }\, \dx v
\\
\notag
&\ll 
T^2
\int_{-\infty}^{\infty} H(v)   \,e^{-2 \vert v \vert }\, \dx v
+ 
T  \int_{U}^T 
 \int_{-\infty}^{\infty} \vert h(x, t, U, v) \vert ^2  \,e^{-2 \vert v \vert }\, \dx v \,\, \dx t
\\
\notag
& 
\ll
T^2  (F^+_q(x,T) + F^+_q(x,U))
+
T  \int_{U}^T  (F^+_q(x,t) + F^+_q(x,U)) \,\, \dx t
 \\
\label{G-prime-second}  
&
\ll T^2  \max_{U\leq t\leq T}F^+_q(x,t).
\end{align}
Combining  \eqref{G-estim} and \eqref{G-prime-first}-\eqref{G-prime-second}
we obtain the estimate
\begin{equation}
\label{thesis}
\int_{-1}^1  \vert H^{\prime}(v) \vert \, \dx v  \ll T\max_{U\leq t\leq T}F^+_q(x,t)
\end{equation}
and the lemma  follows from \eqref{defgv}, \eqref{Sob-Gall-ineq} and \eqref{thesis}.
\end{proof}

For the low-lying zeros, we can link the mean-square average of $\Sigma^+(x,T,0)$ and 
$\Sigma(x,T,0)$, defined in \eqref{Sigma-def}, to $F^+_q(x,T)$, respectively $F_q(x,T)$,
defined in \eqref{Fq-def}.
This lemma will play a crucial role in proving both Theorems \ref{thmfullrange} 
and \ref{thm-Lzero-onehalf}.
\begin{lem}\label{lemlp}
For $x\ge 2$ and $T\ge 0$, we have 
\begin{equation*}
\int_{x}^{2x} \vert\Sigma(t,T,0) \vert^2 \, \dx t 
\ll 
 x F_q(x, T)
\quad 
\textrm{and}
\quad
\int_{x}^{2x} \vert \Sigma^+(t,T,0)  \vert^2 \, \dx t 
\ll 
 x F^+_q(x, T).  
\end{equation*}
\end{lem}
\begin{proof}
We only prove the first estimate, as the second can be obtained in a completely similar way.
Substituting $t= x e^{v}$ in \eqref{Sigma-def} we obtain
\begin{align*}
\int_{x}^{2x} \vert\Sigma(t,T,0) \vert^2 \, \dx t 
&= x \int_{0}^{\log 2} \vert\Sigma(x,T,v) \vert^2 e^{v} \, \dx v \nonumber\\
&\ll 
x \int_{-\infty}^{\infty}  \vert\Sigma(x,T,v) \vert^2 e^{-2 \vert v \vert } \, \dx v
= 
x F_q(x, T),
\end{align*}
where the second equality is a 
consequence of \eqref{integral-form-of-F}.
\end{proof}

The next lemma shows how to connect the contribution of the set of zeros with
$\gamma\leq  0$ to the zero-sums in \eqref{Sigma-def} and in \eqref{eqn1}
with the one obtained using the set of zeros having $\gamma\ge0$.

\begin{lem}\label{conj-trick-lemma}
If $(a, q)= 1$, $w\in \mathbb{R}$ and $T\ge0$, then
\[
\sum_{\chi \pmod{q}}\overline{\chi}(a) \sum_{\substack{-T \leq  \gamma \leq  0
}} \frac{w^{i\gamma}}{1/2 + i\gamma} 
=
\overline{
    \sum_{\chi \pmod{q}}
  \overline{\chi}(a) \!\!\! 
   \sum_{\substack{0\leq  \gamma \leq  T }} 
\frac{w^{i\gamma}}{1/2 + i\gamma}}.
\] 
\end{lem}
\begin{proof} 
Using \eqref{conj-trick} and 
the equality of the set of Dirichlet characters and the set of their conjugates, 
we have for every real number $w$,

\begin{align*}
\sum_{\chi \pmod{q}}\overline{\chi}(a) \sum_{\substack{-T \leq  \gamma \leq  0
\\L(1/2+ i\gamma, \chi) = 0
}} \frac{w^{i\gamma}}{1/2 + i\gamma} 
&=
\overline{
    \sum_{\chi \pmod{q}}
  \chi(a) \!\!\! 
   \sum_{\substack{0\leq  \gamma \leq  T \\L(1/2 + i\gamma, \overline{\chi}) = 0
}} 
\frac{w^{i\gamma}}{1/2 + i\gamma}}
\\
&=
\overline{
    \sum_{\overline{\psi} \pmod{q}}
  \overline{\psi}(a) \!\!\! 
   \sum_{\substack{0\leq  \gamma \leq  T \\L(1/2+ i\gamma, \psi) = 0
}} 
\frac{w^{i\gamma}}{1/2 + i\gamma}}
\\
&
=
\overline{
    \sum_{\psi \pmod{q}}
  \overline{\psi}(a) \!\!\! 
   \sum_{\substack{0\leq  \gamma \leq  T \\ L(1/2+ i\gamma, \psi)  = 0
}} 
\frac{w^{i\gamma}}{1/2 + i\gamma}}.
\end{align*}
The lemma follows by renaming $\psi$ as $\chi$ in the last quantity.
\end{proof}

We are now ready to  prove  Theorem \ref{thm-Lzero-onehalf}.
\begin{proof}[Proof of Theorem \ref{thm-Lzero-onehalf}]
Let $q$ be fixed and large and let $a=1$.
Given $\eps>0$, let $x$ be such that
$x^{1-2\eps} \leq  q \leq  x^{1-\eps}$. 
Put $U=x^{\epsilon}$ and set $c_0=2(\sqrt{2}-1)$.
For every $x>0$ and $\vert\gamma \vert \leq  U$ we define
\[
w(x,\gamma) 
\coloneq 
\frac{(2^{1/2+i\gamma}-1)x^{i\gamma}}{c_0\cdot(1/2+i\gamma)} 
= 
\frac{1}{c_0\sqrt{x}}
\int_{x}^{2x} t^{-1/2+i\gamma}\, \dx t.
\]
We will use $w(x,\gamma)$
to weight the contribution of the low-lying zeros on the critical line.
The value of $c_0$ was chosen so that $w(x,0) = 1$ for every positive $x$.
We now average $w(x,\gamma)$ over the zeros with imaginary parts in  $[-U,U]$
and over the characters modulo $q$.
Clearly
\[
\Sigma_1 \coloneq 
\sum_{\chi\pmod{q}} \sum_{\substack{ \vert \gamma \vert \leq U
}}  
w(x,\gamma)
=
\frac{1}{c_0\sqrt{x}}
\int_{x}^{2x}
\Bigl(
\sum_{\chi\pmod{q}} \sum_{\substack{ \vert \gamma \vert \leq U
}}  t^{i\gamma}
\Bigr)\, t^{-1/2}\, \dx t
\]
and hence, recalling \eqref{Sigma-def}, we have 
\begin{align}
\notag
\vert \Sigma_1 \vert
&=
\frac{1}{c_0\sqrt{x}}
\Bigl\vert
\int_{x}^{2x}  t^{-1/2}\, \Sigma(t,U,0;q,1)\,\dx t
\Bigl\vert
\\ \notag &
\ll
\frac{1}{\sqrt{x}}
\Bigl(\int_{x}^{2x}  t^{-1}\dx t\Bigr)^{1/2} 
\Bigl(\int_{x}^{2x} \vert  \Sigma(t,U,0;q,1) \vert^2\,\dx t\Bigr)^{1/2}
\\
\label{sigma1-estim}
&
\ll 
(F_q(x,U;1))^{1/2},
\end{align}
in which we used the Cauchy--Schwarz inequality \eqref{Cauchy} and the first part of Lemma \ref{lemlp}.
Arguing in a similar way, but now using the second part of Lemma \ref{lemlp}, we obtain
\begin{align}
\vert \Sigma_2 \vert
\coloneq 
\Bigl\vert
\sum_{\chi\pmod{q}}\sum_{\substack{0\le\gamma \leq U
}}  w(x,\gamma)
\Bigl\vert
\label{sigma2-estim}
&
\ll 
(F^+_q(x,U;1))^{1/2}.
\end{align}

Moreover, from Lemma \ref{conj-trick-lemma}, we have
\begin{equation}
\label{conj-trick2}
\Sigma_3
\coloneq 
\sum_{\chi\pmod{q}}\sum_{\substack{-U \leq  \gamma \leq  0
}}  w(x,\gamma)
=
\overline{\sum_{\chi\pmod{q}}\sum_{\substack{0\le\gamma \leq U
}}  w(x,\gamma)}
=
\overline{\Sigma_2}.
\end{equation}

We can now show how to use $w(x,\gamma)$ to estimate the 
number of $L$-functions modulo $q$ that 
vanish at $1/2$. Recalling \eqref{zero-set-def} and 
using \eqref{sigma1-estim}-\eqref{conj-trick2} we have that 
\begin{align}
\notag
\#Z_q
= 
\sum_{\substack{\chi\pmod{q}\\\ L(\frac{1}{2}, \chi)=0}}
1
&=
\sum_{\chi\pmod{q}}
\Bigl(
-
\sum_{\substack{ \vert \gamma \vert \leq U
}}  w(x,\gamma)
+
\sum_{\substack{0\leq  \gamma \leq  U
}}  w(x,\gamma)
+
\sum_{\substack{-U \leq  \gamma \leq  0
}}  w(x,\gamma)
\Bigr)
\\&
\notag
\ll
\vert \Sigma_1 \vert
+
\vert \Sigma_2 \vert
+
\vert \Sigma_3 \vert
\ll
(F_q(x,U;1) \bigr)^{1/2} + (F^+_q(x,U;1) \bigr)^{1/2}
\\
\label{sigmas-def}
&\ll
\sqrt{q U x^{\eps}}
\ll
\sqrt{q}x^{\eps}
\end{align}
in which we have used Conjecture \ref{yalconj1} for $a=1$ and $U=x^\eps$.

The proof now follows from \eqref{sigmas-def} on
recalling that $x^{1-2\eps} \leq  q \leq  x^{1-\eps}$.
\end{proof}

Our final lemma will play a crucial role in  proving Theorems \ref{thmfullrange}.
\begin{lem}\label{lem21}
Let $\eps >0$, $(a, q)= 1$ and put
\[
I(x,q,T) 
\coloneq 
\frac{1}{\varphi(q)} 
\sum_{\substack{\chi \pmod{q}}} \overline{\chi}(a) \sum_{\substack{ \vert \gamma \vert  \leq  T 
}} \frac{x^{1/2+ i\gamma}}{1/2 + i\gamma}.
\]
Let $q\ge x^\eps$, $x>q^{1+2\eps}$ and $J$ be the maximal integer such that $(x/2^J)^{1-\eps}\ge q$.
Assuming GRH and Conjecture \ref{yalconj1} in the ranges $x^\eps < q \leq  x^{1-\eps}$ and
$x^{\eps} \leq  T < x$,
we have for $j=0,\dots,J$,
\[
\Bigl \vert  I\bigl(\frac{x}{2^j},q,T\bigr)- I\bigl(\frac{x}{2^{j+1}},q,T\bigr)\Bigr \vert  \ll \sqrt{\frac{x}{2^jq}}\,x^{\eps},
\]
uniformly for
$x^\eps < q  \leq (x/2^j)^{1-\eps}$ 
and $x^{\eps} \leq T \leq  x/2^j$.
\end{lem}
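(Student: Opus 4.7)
The plan is to represent the difference $I(x/2^{j},q,T)-I(x/2^{j+1},q,T)$ (writing $y:=x/2^{j}$) as an integral of the exponential sum $\Sigma(u,T,0)$ against $u^{-1/2}$, and then to estimate it using Lemma~\ref{lemlp} combined with Conjecture~\ref{yalconj1}. Starting from the elementary identity $(y^{1/2+i\gamma}-(y/2)^{1/2+i\gamma})/(1/2+i\gamma)=\int_{y/2}^{y}u^{-1/2+i\gamma}du$, an interchange of sum and integral gives
\[
\varphi(q)\bigl[I(y,q,T)-I(y/2,q,T)\bigr]=\int_{y/2}^{y}u^{-1/2}\,\Sigma(u,T,0)\,du,
\]
and the Cauchy--Schwarz inequality combined with Lemma~\ref{lemlp} yields $|\varphi(q)[I(y,q,T)-I(y/2,q,T)]|^{2}\ll y\,F_q(y,T)$. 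Conjecture~\ref{yalconj1}, applied with a small parameter $\eps'\le\eps/10$, then furnishes $F_q(y,T)\ll\varphi(q)T\,y^{\eps'}$ throughout the ranges $q\le y^{1-\eps'}$ and $x^{\eps'}\le T<y$ guaranteed by our hypotheses, so that, using $\varphi(q)\gg q/\log\log q\gg q\,x^{-\eps'}$,
\[
\bigl|I(y,q,T)-I(y/2,q,T)\bigr|^{2}\ll\frac{yT\,x^{2\eps'}}{q}.
\]
This already delivers the claimed bound $\sqrt{y/q}\,x^{\eps}$ when $T$ is near the lower endpoint $x^{\eps'}$.

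To extend the estimate to all $T\in[x^{\eps},\,x/2^{j}]$, I would split the zero sum into a ``low'' part $|\gamma|\le T_{0}:=x^{\eps'}$ and a ``high'' tail $T_{0}<|\gamma|\le T$. The low part is controlled by the above Cauchy--Schwarz estimate with $T_{0}$ in place of $T$. The high tail, which arises as $I(y,q,T)-I(y,q,T_{0})$ and its analogue at $y/2$, is handled by dyadically decomposing the zero range $T_{0}<|\gamma|\le T$ into blocks $U<|\gamma|\le 2U$, applying Cauchy--Schwarz on each, and bounding the mean square of $\Sigma$ on the block via Lemma~\ref{lem3} together with the conjecture. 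The $1/|\gamma|\asymp 1/U$ decay of the summand $y^{1/2+i\gamma}/(1/2+i\gamma)$ contributes an extra factor $1/U$ that yields geometric decay across dyadic blocks, so the total tail contribution is $\ll\sqrt{y/q}\,x^{\eps}$ uniformly in~$T$.

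The main obstacle is this dyadic tail estimate. A direct Cauchy--Schwarz on $\int_{y/2}^{y}u^{-1/2}\Sigma(u,T,0)du$ does not exploit the $1/|\gamma|$ weighting hidden in the explicit formula, and so gives only $\sqrt{yTy^{\eps'}/\varphi(q)}$, which is tight only for $T\asymp x^{\eps'}$. Extracting the $1/|\gamma|$ saving requires combining Conjecture~\ref{yalconj1} (which provides the crucial $\varphi(q)$, rather than the trivial $\varphi(q)^{2}$, in the $L^{2}$ estimate of $\Sigma$) with a careful partial summation or dyadic decomposition in the zero range---precisely the ``dissection into dyadic intervals'' mentioned in the introduction as the crux of the proof.
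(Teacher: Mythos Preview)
Your approach is essentially the paper's: split the zero sum at height $U=x^{\eps'}$, treat the low part $|\gamma|\le U$ via the integral identity $\varphi(q)[I(y,q,U)-I(y/2,q,U)]=\int_{y/2}^{y}u^{-1/2}\Sigma(u,U,0)\,du$ together with Cauchy--Schwarz, Lemma~\ref{lemlp} and Conjecture~\ref{yalconj1}, and treat the tail $U<|\gamma|\le T$ pointwise at the two fixed arguments $y$ and $y/2$ using Lemma~\ref{lem3} plus the conjecture. The paper carries out the tail by Abel summation in the zero-height rather than dyadic blocks, which is the continuous analogue of what you describe.

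Two small inaccuracies worth noting. First, on the tail no Cauchy--Schwarz is needed: Lemma~\ref{lem3} already gives a pointwise bound $|\Sigma(cy,2U,0)-\Sigma(cy,U,0)|\ll(U\max_{t\le 2U}F_q(cy,t))^{1/2}$. Second, the dyadic blocks do \emph{not} decay geometrically: the $1/U$ from $|1/2+i\gamma|^{-1}$ exactly cancels the $U$ coming from Lemma~\ref{lem3} and the conjectural bound $F_q\ll\varphi(q)\,t\,y^{\eps'}$, so each block contributes $\asymp\sqrt{\varphi(q)}\,y^{\eps'/2}$ and the sum over $O(\log x)$ blocks picks up only a harmless logarithm (equivalently, the paper's partial summation produces $\int_U^T w^{-3/2}\cdot w^{1/2}\,dw=\log(T/U)$). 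This is absorbed into $x^{\eps}$, so your conclusion stands.
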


\begin{proof} 
We first focus on the contribution of the non-negative 
imaginary parts in the zero-sum embedded into $I(x,q,T)$.

Let $U = x^{\eps}$.
Let $y=x/2^{j+1}$, $j=0,\dotsc,J$, $q\leq  y^{1-\eps}$. 
We first handle the high-lying zeros.
Using partial summation, Conjecture \ref{yalconj1} and Lemma \ref{lem3}, for $c=1,2$ we obtain
\begin{align*}
\notag
  \Big \vert \!\!\! \sum_{\chi \pmod{q}}
  \overline{\chi}(a) \!\!\! \sum_{\substack{U <  \gamma \leq  T}
  } &\frac{(cy)^{ i\gamma}}{1/2 + i\gamma} \Big \vert  
  \nonumber
   \ll  \frac{1}{T} \vert \Sigma^+(cy,T,0) - \Sigma^+(cy,U,0) \vert
   \\& \hskip3cm
\notag
  +\!\!
   \int_{U}^{T} 
   \vert
   \Sigma^+(cy,w,0) - \Sigma^+(cy,U,0)
   \vert
  \, w^{-2}\, \dx w \\
  \nonumber
  & \ll \frac{1}{\sqrt{T}} \Bigl(
\max_{U \leq  t \leq T}
F^+_q(cy,t) \Bigr)^{1/2} +\!\!
\int_{U}^{T} \!\! \Bigl(
\max_{U \leq  t \leq w}
F^+_q(cy,t) \Bigr)^{1/2} w^{-3/2}\, \dx w
\\
&\ll  \sqrt{q} \, y^{\eps}
\ll  \sqrt{q} \, x^{\eps},
\end{align*}
as $x \to \infty,$ uniformly for $q\leq y^{1-\eps}$.
This implies that for $U < T \leq  y$, we have
  \begin{align}\label{treat1}
\Big \vert  \sum_{\chi \pmod{q}}\overline{\chi}(a) \sum_{\substack{ U <  \gamma \leq  T}
 } \frac{(2y)^{1/2+ i\gamma}-y^{1/2+i\gamma}}{1/2 + i\gamma} \Big \vert   
\ll \sqrt{qy} x^{\eps},
   \end{align}
as $x \to \infty,$ uniformly in $q\leq y^{1-\eps}$.
Using Lemma \ref{conj-trick-lemma}, from \eqref{treat1} we also obtain
\begin{equation}
\label{negative-interval}
\Bigl\vert
\sum_{\chi \pmod{q}}\overline{\chi}(a) \sum_{-T \leq  \gamma < -U}
\frac{(2y)^{1/2+ i\gamma}-y^{1/2+i\gamma}}{1/2 + i\gamma}
\Bigr\vert
\ll \sqrt{qy} x^{\eps}.
\end{equation}

We now handle the low-lying zeros.
Recall that $y=x/2^{j+1}$, $j=0,\dotsc,J$. 
Using  Conjecture \ref{yalconj1} and Lemma \ref{lemlp}, we have
\begin{align}\label{treat12}
\Big \vert  \sum_{\chi \pmod{q}}\overline{\chi}(a)
&  \sum_{\substack{ \vert \gamma \vert \leq  U
 }} 
 \frac{(2y)^{1/2+ i\gamma}-y^{1/2+i\gamma}}{1/2 + i\gamma} \Big \vert   
 =
\Big \vert  \sum_{\chi \pmod{q}}\overline{\chi}(a) \sum_{\substack{ \vert \gamma \vert \leq  U
 }} 
 \int_{y}^{2y} w^{-1/2+i\gamma}\, \dx w \Big \vert   
 \nonumber\\
& =
 \Big \vert \int_{y}^{2y} w^{-1/2} 
 \Sigma(w,U,0)
\, \dx w\Big \vert 
 \ll
 \Big( \int_{y}^{2y} 
 \vert\Sigma(w,U,0) \vert^2
\, \dx w\Big)^{1/2}
 \nonumber\\
 &\ll 
  (y F_q(y, U))^{1/2}
 \ll (q U y^{1+\eps})^{1/2} \ll \sqrt{qy} x^{\eps},
\end{align}
as $x \to \infty$, where we have used the Cauchy--Schwarz inequality \eqref{Cauchy}
in the previous estimate.

Combining \eqref{treat1}-\eqref{treat12} and recalling $q/\varphi(q)\ll \log \log  q$,
we obtain 
\begin{equation*}
\frac{1}{\varphi(q)} 
\Bigl\vert
\sum_{\chi \pmod{q}}\overline{\chi}(a) \sum_{\vert \gamma \vert \leq  T}
\frac{(2y)^{1/2+ i\gamma}-y^{1/2+i\gamma}}{1/2 + i\gamma}
\Bigr\vert
\ll \sqrt{\frac{y}{q}} x^{\eps},
\end{equation*}
and this completes the proof.
\end{proof}

\begin{proof}[Proof of Theorem \ref{thmfullrange}]
Let $x>q^{1+2\eps}$ and $J$ be the maximal integer such that $(x/2^J)^{1-\eps}\ge q$.
Using the Brun--Titchmarsh Theorem (Classical Theorem \ref{BT-thm}), 
Lemma \ref{lem21} and equation \eqref{eqn1} with $x^{\eps} < q\leq x^{1-2\eps}$, 
$Z=x/2^j$,
$j=0,\dotsc, J$, we have
\begin{align}\notag
\psi(x;q,a)- \frac{x}{\varphi(q)}
&= \sum_{j=0}^{J} \Bigl(
\psi\bigl(\frac{x}{2^j};q,a\bigr)
- 
\psi\bigl(\frac{x}{2^{j+1}};q,a\bigr) - \frac{x}{2^{j+1} \varphi(q)}
\Bigr) 
\\&\hskip2cm
\nonumber
+
\psi\bigl(\frac{x}{2^{J+1}};q,a\bigr) - \frac{x}{2^{J+1} \varphi(q)}
\\ \nonumber
& \ll
\sum_{j=0}^{J} 
\sqrt{\frac{x}{2^j q}}  x^{\eps} +
\frac{x}{2^{J+1} \varphi(q)} 
\ll \sqrt{\frac{x}{ q}} 
x^{\eps}.
\end{align}

Theorem \ref{thmfullrange} now follows on recalling that in the remaining interval $1\leq  q\leq  x^\eps$ the GRH assumption by itself already 
implies the result.
\end{proof}
 
\medskip
\noindent \textbf{Acknowledgment}. 
The authors thank the anonymous referee for his/her time and attention to details in our paper as well as for 
his/her positive and insightful comments that greatly helped us to improve and clarify the presentation of 
our results.
The first author would like to thank the University of Hong Kong, 
Dalhousie University and the Max-Planck-Institut f\"ur Mathematik in Bonn 
for the hospitality and excellent working conditions.
The third author also expresses his gratitude to the latter institute. 
We thank C.Y.~\Yildirim for 
helpful email correspondence and W.~Heap for 
suggesting some improvements.
We  thank A.~Granville for an intense e-mail discussion
that helped us to improve our work.
In particular, he pointed out the importance of having control on how often $L(\frac{1}{2},\chi)=0$.

\medskip
\noindent \textbf{Data availability.} Not applicable.

\medskip
\noindent \textbf{Competing Interests.} The authors have no competing interests to declare that are relevant to the content of this article.


\begin{thebibliography}{10}
\small

\bibitem{arm}
J.~V.~Armitage. 
\newblock Zeta functions with zero at $s=\frac{1}{2}$. 
\newblock{\em Invent. Math}, 15:199--205, 1972.

\bibitem{BCY2011}
H.~M~ Bui, B.~Conrey, and M.~P.~Young.
\newblock More than $41$\% of the zeros of the zeta function are on the critical line.
\newblock {\em Acta Arith.}, 150(1):35--64, 2011.

\bibitem{chowla} 
S.~D.~Chowla, 
\newblock The Riemann Hypothesis and Hilbert's tenth problem.
\newblock {\em Gordon and Breach Science Publishers}, New York, 1980.

\bibitem{Davenport}
H.~Davenport.
\newblock {\em {Multiplicative Number Theory}}, volume~74 of {\em Graduate Texts in Mathematics}.
\newblock Springer-Verlag, New York, third edition, 2000.
\newblock Revised and with a preface by Hugh L. Montgomery.

\bibitem{ElliottH1968/69}
P.~Elliott and H.~Halberstam.
\newblock A conjecture in prime number theory.
\newblock {\em Symp. Math.}, 4:59--72, 1968/69.

\bibitem{Fiorilli2015}
D.~Fiorilli.
\newblock On the non-vanishing of Dirichlet $L$-functions at the central point.
\newblock {\em Q. J. Math.}, 66(2):517--528, 2015.

\bibitem{FriedlanderG1989}
J.~B.~Friedlander and A.~Granville.
\newblock Limitations to the equi-distribution of primes {I}.
\newblock {\em Ann. of Math. (2)}, 129:363--382, 1989.

\bibitem{GM}
P.~X.~Gallagher and J.~H.~Mueller.
\newblock Primes and zeros in short intervals.
\newblock {\em J. Reine Angew. Math.}, 303(304):205--220, 1978.

\bibitem{PCintro}
D.~A.~Goldston.
\newblock Notes on pair correlation of zeros and prime numbers.
\newblock In {\em Recent perspectives in random matrix theory and number
theory}, volume 322 of {\em London Math. Soc. Lecture Note Ser.}, pages
79--110. Cambridge Univ. Press, Cambridge, 2005.

\bibitem{GM1984}
D.~A.~Goldston and H.~L.~Montgomery.
\newblock Pair correlation of zeros and primes in short intervals.
\newblock In {\em Analytic number theory and {D}iophantine problems
({S}tillwater, {OK}, 1984)}, volume~70 of {\em Progr. Math.}, pages 183--203.
Birkh\"auser Boston, Boston, MA, 1987.

\bibitem{Granvillecramer}
A.~Granville.
\newblock Harald {C}ram\'{e}r and the distribution of prime numbers.
\newblock {\em Scand. Actuar. J.}, 1:12--28, 1995.

\bibitem{HB}
D.~R.~Heath-Brown.
\newblock Gaps between primes, and the pair correlation of zeros of the zeta function.
\newblock {\em Acta Arith.}, 41(1):85--99, 1982.

\bibitem{NK} 
N.~Kandhil.
\newblock A note on Dedekind zeta values at 1/2.
\newblock{\em Int. J. Number Theory,} 18(06):1289--1299, 2022.
 

\bibitem{KMN}
R.~Khan, D.~Mili\'cevi\'c and T.~H.~Ngo.
\newblock Nonvanishing of Dirichlet $L$-functions, II. 
\newblock{\em Math. Z.}, 300(2):1603--1613, 2022.
	
\bibitem{LandauHand}
E.~Landau.
\newblock {\em Handbuch der {L}ehre von der {V}erteilung der {P}rimzahlen.}
\newblock Chelsea Publishing Co., New York, 1953.
\newblock 2d ed, With an appendix by Paul T. Bateman.
 
\bibitem{Leung2025}
S.-K.~Leung.
\newblock Non-vanishing of Dirichlet $L$-functions with smooth conductors.
\newblock {\em Ramanujan J.} 66, Article no. 69, 2025.

\bibitem{Levinson1974}
N.~Levinson.
\newblock More than one third of zeros of {R}iemann's zeta-function are on {$\sigma =1/2$}.
\newblock {\em 	Adv. Math.}, 13:383--436, 1974.

\bibitem{Maier}
H.~Maier.
\newblock Primes in short intervals.
\newblock {\em Michigan Math. J.}, 32(2):221--225, 1985.

\bibitem{marmat}
K.~Matom\"aki and M.~{\v C}ech.
\newblock A note on exceptional characters and non-vanishing of Dirichlet $L$-functions.
\newblock{\em Math. Ann.}, 389(1):987--996, 2024.

\bibitem{MontgomeryTopics}
H.~L.~Montgomery. 
\newblock {\em {Topics in Multiplicative Number Theory}}.
\newblock   Springer Lecture Notes 227, 1971.

\bibitem{Montgomery1973}
H.~L.~Montgomery.
\newblock The pair correlation of zeros of the zeta function.
\newblock {\em Proc. Symp. Pure Math.}, 24:181--193, 1973.

\bibitem{MVsieve}
H.~L.~Montgomery and R.~C.~Vaughan.
\newblock The large sieve.
\newblock {\em Mathematika}, 20:119--134, 1973.

\bibitem{Montvaug}
H.~L.~Montgomery and R.~C.~Vaughan.
\newblock Hilbert's inequality.
\newblock {\em J. London Math. Soc. (2)}, 8:73--82, 1974.

\bibitem{nar}
W.~Narkiewicz.
\newblock {\em {The Development of Prime Number Theory}}.
\newblock Springer Monographs in Mathematics. Springer-Verlag, Berlin, 2000.

\bibitem{Ody87}
A.~M.~Odlyzko.
\newblock On the distribution of spacings between zeros of the zeta function.
\newblock {\em Math. Comp.}, 48(177):273--308, 1987.

\bibitem{Odlyzko2001}
A.~M.~Odlyzko.
\newblock The {$10^{22}$}-nd zero of the {R}iemann zeta function.
\newblock In {\em Dynamical, spectral, and arithmetic zeta functions ({S}an
{A}ntonio, {TX}, 1999)}, volume 290 of {\em Contemp. Math.}, pages 139--144.
Amer. Math. Soc., Providence, RI, 2001.

\bibitem{ozlukthesis}
A.~E.~\Ozluk.
\newblock {\em Pair correlation of zeros of Dirichlet $L$-functions}.
\newblock ProQuest LLC, Ann Arbor, MI, 1982.
\newblock Thesis (Ph.D.)--University of Michigan.

\bibitem{OzlukBanff}
A.~E.~\Ozluk.
\newblock On the pair correlation of zeros of {D}irichlet {$L$}-functions.
\newblock In {\em Number theory ({B}anff, {AB}, 1988)}, pages 471--476. de
Gruyter, Berlin, 1990.

\bibitem{ozlukjnt}
A.~E.~\Ozluk.
\newblock On the {$q$}-analogue of the pair correlation conjecture.
\newblock {\em J. Number Theory}, 59(2):319--351, 1996.

\bibitem{OzlukSnyder}
A.~E.~\Ozluk and C.~Snyder. 
\newblock On the distribution of the nontrivial zeros of quadratic $L$-functions close to the real axis.
\newblock {\em Acta Arith.} 91: 209--228, 1999.

\bibitem{QinWu2025}
X.~Qin and X.~Wu.
\newblock Non-vanishing of Dirichlet $L$-functions at the Central Point,
\newblock {\em ArXiv}, 2025, \url{https://doi.org/10.48550/arXiv.2504.11916}.

\bibitem{selberg}
A.~Selberg.
\newblock On the zeros of {R}iemann's zeta-function.
\newblock {\em Skr. Norske Vid.-Akad. Oslo I}, 1942(10):59, 1942.

\bibitem{serre}
J.-P.~Serre.
\newblock Conducteurs d'{A}rtin des caract\`eres r\'eels.
\newblock{\em Invent. Math.}, 14:173--183, 1971.

\bibitem{Sound} 
K.~Soundararajan.
\newblock Nonvanishing of quadratic Dirichlet $L$-functions at $s=\frac{1}{2}$.
\newblock{\em Ann. of Math.}, 152(2):447--488, 2000.



\bibitem{yal}
C.~Y.~\Yildirim.
\newblock The pair correlation of zeros of {D}irichlet {$L$}-functions and
primes in arithmetic progressions.
\newblock {\em Manuscripta Math.}, 72(3): 325--334, 1991.

\end{thebibliography}

\end{document}